\newtheorem{theorem}{Theorem}
\newtheorem{example}[theorem]{Example}
\newtheorem{lemma}[theorem]{Lemma}
\newtheorem{proposition}[theorem]{Proposition}
\newenvironment{proof}[1][Proof]{\noindent\textbf{#1.} }{\ \rule{0.5em}{0.5em}}
\begin{document}

\title{Optimal control of discrete-time linear fractional order systems with
multiplicative noise }
\author{J.J. Trujillo\ \thanks{%
Departamento de An\'{a}lisis Matem\'{a}tico, Universidad de La Laquna, 38271
La Laguna, e-mail jtrujill@ullmat.es}, V.M. Ungureanu \thanks{%
Department of Mathematics, \textquotedblleft Constantin
Brancusi\textquotedblright\ University, Tg. Jiu, Bulevardul Republicii, nr.
1, jud. Gorj, Romaina, e-mail vio@utgjiu.ro.}}
\date{}
\maketitle

\begin{abstract}
A finite horizon linear quadratic(LQ) optimal control problem is studied for
a class of discrete-time linear fractional systems (LFSs) affected by
multiplicative, independent random perturbations. Based on the dynamic
programming technique, two methods are proposed for solving this problem.
The first one seems to be new and uses a linear, expanded-state model of the
LFS. The LQ optimal control problem reduces to a similar one for stochastic
linear systems and the solution is obtained by solving Riccati equations.
The second method appeals to the Principle of Optimality and provides an
algorithm for the computation of the optimal control and cost by using
directly the fractional system. As expected, in both cases the optimal
control is a linear function in the state and can be computed by a computer
program. Two numerical examples proves the effectiveness of each method.
\end{abstract}

\section{Introduction}

Fractional calculus(FC) began to engage mathematicians' interest in the 17th
century as evidenced by a letter of Leibniz to L'H\^{o}spital, dated 30$%
^{th} $ September 1695, which talks about the possibility of non-integer
order differentiation. Later on, famous mathematicians as Fourier, Euler and
Laplace contributed to the foundation of this new branch of mathematics with
various concepts and results. Nowadays, the most popular definitions of the
non-integer order integral or derivative are the Riemann-Liouville, Caputo
and Grunwald-Letnikov definitions. For a historical survey and the current
state of the art, the reader is referred to \cite{Mi}, \cite{Woj}, \cite{Ka},%
\cite{Concept}, \cite{stab} and the references therein.

FC finds use in different fields of science and engineering including the
electrochemistry, electromagnetism, biophysics, quantum mechanics, radiation
physics, statistics or control theory (see \cite{Concept}, \cite{Ka}, \cite%
{Mi}). Such an example comes from the field of autonomous guided vehicles,
which lateral control seems to be improved by using fractional adaptation
schemes \cite{Vin}. Also, partial differential equations of fractional order
were applied to model the wave propagation in viscoelastic media or the
dissipation in seismology or in metallurgy \cite{Mainardi}.

The optimal control theory was intensively developed during the last century
for deterministic systems defined by integer-order derivatives, in both
continuous- and discrete- time cases \cite{Moore}. Since many real-world
phenomena are affected by random factors that exercised a decisive influence
on the processes behavior, stochastic optimal control theory had a similar
evolution in the recent decades (see \cite{atha}, \cite{Cdrag}, \cite{1995}, 
\cite{Optim} and the references therein).

However, only a few papers address optimal control problems for fractional
systems (see e.g. \cite{Tric}, \cite{Agr}, \cite{Agr2}, \cite{Idiri}, \cite%
{kamo}, \cite{polonezi}) and fewer consider stochastic fractional systems 
\cite{SA}, \cite{Amir}.

In this paper we formulate a finite-horizon LQ optimal control problem for
stochastic discrete-time LFSs defined by the Grunwald-Letnikov fractional
derivative. As far as we know, this subject seems to be new.

We use the classical dynamic programming technique to derive two methods for
solving the proposed optimal control problem. Obviously these methods apply
to deterministic discrete-time LFSs.

The first one is new and uses an equivalent linear \textit{expanded-state }%
model of the stochastic LFS. As the name says, the state of this model is
expanded and formed by the actual state and all the past states of the LFS 
\cite{Concept}. The quadratic cost functional is rewritten accordingly and
the original optimal control problem reduces to a LQ optimal control problem
for linear stochastic systems. Since the control weight matrix of the new
optimization problem is not positive, we modify it with a parameter $%
\varepsilon >0$ for achieving the positivity condition. This perturbation is
chosen such that the optimal value of the new performance index (denoted $%
I_{x_{0},N,\varepsilon }(U)$) is independent of $\varepsilon $ and coincides
with the optimal value of the original cost functional ($I_{x_{0},N}(u)$)
(see Proposition \ref{minimul nu dep de eps}). The optimal value of the
performance index $I_{x_{0},N,\varepsilon }(U)$ is a quadratic form in the
initial expanded-state and can be computed by solving a classical matrix
Riccati equation. The optimal feedback law $U$ is linear in state, and
involves the solution of the same Riccati equation. The optimal control
sequence $u$ of the original optimal control problem is computable from $U$.

The second method is a stochastic version of the new algorithm proposed in 
\cite{polonezi} for deterministic LFSs. It uses the \textit{Optimality
Principle} for computing recursively (and starting with the terminal time)
the optimal control sequence $u_{N-1},..,u_{0}$ and the optimal cost.

The main difference between the two methods is that the dynamic programing
approach is applied in the first case to a linear stochastic system, while,
in the second case, the same technique is applied directly to a stochastic
LFS.

To compare the two methods, a numerical example is solved by using two
computer algorithms written for this purpose. As expected, the mathematical
results are the same, but the run-time of the program that implements the
algorithm provided by the first method seems to be shorter.

The paper is organized as follows. In Section 2, we shortly review necessary
notions from FC and we state the optimal control problem $\mathcal{O}$. In
Section 3 we reformulate the problem $\mathcal{O}$ by using the equivalent
linear \textit{expanded-state }model of the stochastic LFS and a
parametrized cost functional $I_{x_{0},N,\varepsilon }(U)$. As mentioned
above, the optimal control and cost can be computed with the solution of an
associated Riccati equation. Finally, a numerical example illustrates the
effectiveness of this first method.

In Section 4 we present the first two steps of the recursive algorithm
(called Algorithm $A$) which starts with the terminal time and provides the
optimal control sequence and cost that solves problem $\mathcal{O}$. The
general step of the Algorithm $A$ is described in the Appendix. The
numerical example presented at the end in Section 4 is solved by using
Algorithm $A$. It proves the applicability of the second method. Some
conclusions are drawn in the last section.

\section{Notations and statement of the problem}

As usual, $%
\mathbb{R}
$ is the set of real numbers, $%
\mathbb{R}
^{d},d\in 
\mathbb{N}
^{\ast }=%
\mathbb{N}
-\{0\}$ is the real Hilbert space of real $d$-dimensional vectors and $%
\mathbb{R}
^{d\times n}$, $n\in 
\mathbb{N}
^{\ast }$ is the linear space of $d\times n$ real matrices. We also denote
by $\left( 
\mathbb{R}
^{d}\right) ^{n}$ the Hilbert space of all $n$ dimensional vectors from $%
\mathbb{R}
^{d}$. Obviously it is isomorphic with $%
\mathbb{R}
^{d\times n}$. In this paper we do not distinguish between a linear operator
on $%
\mathbb{R}
^{d\times n}$ (or $%
\mathbb{R}
^{d}$) and the associated matrix. Also we shall write\ $\left\langle
.,.\right\rangle $ for the inner product and $\left\Vert .\right\Vert $ for
norms of elements and operators. For any linear operator $T$ acting on
finite dimensional real spaces, we denote by $T^{\ast }$ the adjoint
operator of $T$. We say that $T:%
\mathbb{R}
^{n}\rightarrow 
\mathbb{R}
^{n}$ is nonnegative (we write $T\geq 0$) if $T=T^{\ast }$ and $\left\langle
Tx,x\right\rangle \geq 0,$ for all $x\in 
\mathbb{R}
^{n}$; $T$ is positive (we write $T>0$) if $T\geq 0$ and there is $\delta >0$
such that $\left\langle Tx,x\right\rangle \geq \delta \left\Vert
x\right\Vert ^{2},$ for all $x\in 
\mathbb{R}
^{n}$. The identity operator on $%
\mathbb{R}
^{n}$ will be denoted by $I_{%
\mathbb{R}
^{d}}$.

Let $\alpha \in \left( 0,2\right) $ and $h>0$ be fixed. We recall that for
all $j\in \mathbf{%
\mathbb{N}
}$, $\left( 
\begin{array}{c}
\alpha \\ 
j%
\end{array}%
\right) $ denotes the generalized binomial coefficient defined by 
\begin{equation*}
\left( 
\begin{array}{c}
\alpha \\ 
j%
\end{array}%
\right) =\left\{ 
\begin{array}{c}
1,j=0 \\ 
\frac{\alpha \left( \alpha -1\right) \cdot ...\cdot \left( \alpha
+1-j\right) }{j!},j\in \mathbf{%
\mathbb{N}
}^{\ast }%
\end{array}%
\right. .
\end{equation*}%
Then, for any sequence $\{x_{k}\}_{k\in 
\mathbb{N}
}\subset 
\mathbb{R}
^{d},d\in 
\mathbb{N}
$%
\begin{equation*}
\Delta ^{\left[ \alpha \right] }x_{k+1}=\frac{1}{h^{\alpha }}%
\dsum\limits_{j=0}^{k+1}\left( -1\right) ^{j}\left( 
\begin{array}{c}
\alpha \\ 
j%
\end{array}%
\right) x_{k+1-j},h>0
\end{equation*}%
is the discrete fractional-order operator that arises in the Gr\"{u}%
nwald-Letnikov definition of the fractional order derivatives (see for e.g. 
\cite{Concept}).

Let $\{\xi _{k}\}_{k\in \mathbf{%
\mathbb{N}
}}$ be a sequence of real-valued, mutually independent random variables on
the probability space $\left( \Omega ,\mathcal{G},P\right) $ that satisfies
the condition $E\left[ \xi _{k}\right] =0,E\left[ \xi _{k}^{2}\right]
=1,k\in \mathbf{%
\mathbb{N}
}$\textbf{. (} Here $E\left[ \xi \right] $ is the mean (expectation) of $\xi
_{k}$.) The $\sigma -$ algebra generated by $\{\xi _{i},0\leq i\leq
n-1\},n\in \mathbf{%
\mathbb{N}
}^{\ast }$ will be denoted by $\mathcal{G}_{n}$. We consider the stochastic
discrete-time fractional system with control%
\begin{eqnarray}
\Delta ^{\left[ \alpha \right] }x_{k+1} &=&\mathbb{A}x_{k}+\xi _{k}\mathbb{B}%
x_{k}+\mathbb{D}u_{k}+\xi _{k}\mathbb{F}u_{k},k\in 
\mathbb{N}
\label{ec 1} \\
x_{0} &=&x\in 
\mathbb{R}
^{d},  \notag
\end{eqnarray}%
where $\mathbb{A}$, $\mathbb{B\in }$ $%
\mathbb{R}
^{d\times d}$, $\mathbb{D}$, $\mathbb{F}$ $\mathbb{\in }$ $%
\mathbb{R}
^{d\times m},m\in 
\mathbb{N}
$ and the control $u=\{u_{k}\}_{k\in 
\mathbb{N}
}$ belongs to a class of admissible controls $\mathcal{U}^{a}$ formed by all
sequences $u$ which elements $u_{k}$ are $\mathcal{G}_{k}$-measurable, $%
\mathbb{R}
^{m}$-valued random variables satisfying $E\left[ \left\Vert
u_{k}\right\Vert ^{2}\right] <\infty $ for all $k\in 
\mathbb{N}
$.

A finite segment of an admissible control sequence $u$ is of the form $%
u_{k},u_{k+1}$, ... , $u_{N}$. In the sequel we shall denote by $\mathcal{U}%
_{k,N-1}^{a}$ the set of segments $u_{k},u_{k+1},...$, $u_{N-1}$ of
admissible controls $u\in \mathcal{U}^{a}.$

Multiplying (\ref{ec 1}) by $h^{\alpha }$ and denoting $A_{0}=h^{\alpha }%
\mathbb{A}+\alpha I_{%
\mathbb{R}
^{d}}$, $T=h^{\alpha }\mathbb{T}$, for any $T=B$, $D$, $F$, $\mathbb{T}=%
\mathbb{B}$, $\mathbb{D}$, $\mathbb{F}$, $c_{j}:=\left( -1\right) ^{j}\left( 
\begin{array}{c}
\alpha \\ 
j+1%
\end{array}%
\right) $ and $A_{j}=c_{j}I_{%
\mathbb{R}
^{d}}$, system (\ref{ec 1}) can be equivalently rewritten as%
\begin{eqnarray}
x_{k+1} &=&\dsum\limits_{j=0}^{k}A_{j}x_{k-j}+\xi _{k}Bx_{k}+Du_{k}+\xi
_{k}Fu_{k},  \label{ec c2} \\
x_{0} &=&x\in 
\mathbb{R}
^{d}.  \label{ci c2}
\end{eqnarray}

Let $x_{0}\in 
\mathbb{R}
^{d}$ and $N\in 
\mathbb{N}
$ be fixed, $C\in 
\mathbb{R}
^{p\times d},S\in 
\mathbb{R}
^{d\times d},S\geq 0$ and $K\in 
\mathbb{R}
^{m\times m},K>0$.

Our \textit{optimal control problem }$\mathcal{O}$ is to minimize the cost
functional 
\begin{gather}
I_{x_{0},N}(u)=  \label{cost funct} \\
\sum\limits_{n=0}^{N-1}E\left[ \left( \left\Vert Cx_{n}\right\Vert
^{2}+<Ku_{n},u_{n}>\right) \right] +E<Sx_{N},x_{N}>  \notag
\end{gather}%
subject to (\ref{ec c2})-(\ref{ci c2}), over the class $\mathcal{U}%
_{0,N-1}^{a}$ of segments of admissible controls.

\section{An equivalent optimal control problem for a non-fractional linear
system}

In this section we first present an equivalent linear \textit{expanded-state 
}model (see (\ref{sl 1})-(\ref{ci sl1})) of the stochastic LFS. Then we show
that optimal control problem $\mathcal{O}$ is "equivalent" with a LQ optimal
control problem associated with (\ref{sl 1})-(\ref{ci sl1}). The word
"equivalent" means here that the two optimal control problems have the same
optimal costs and an optimal control sequence (OCS) of the one can be
obtained from an OCS of the other. Since the solution of the new optimal
control problem can be obtained by solving a backward discrete-time Riccati
equation, we get a solution of $\mathcal{O}$(see Theorem \ref{t opt}).

\subsection{A linear expanded-state\textit{\ }model}

Let $\mathcal{A},\mathcal{B}:\left( 
\mathbb{R}
^{d}\right) ^{N}\rightarrow \left( 
\mathbb{R}
^{d}\right) ^{N}$ be the linear operators defined by the matrices 
\begin{equation}
\mathcal{A}=\left( 
\begin{array}{cccc}
A_{0} & c_{1}I_{%
\mathbb{R}
^{d}} & ... & c_{N-1}I_{%
\mathbb{R}
^{d}} \\ 
I_{%
\mathbb{R}
^{d}} & 0 & . & 0 \\ 
. & I_{%
\mathbb{R}
^{d}} & . & . \\ 
. & . & I_{%
\mathbb{R}
^{d}} & 0%
\end{array}%
\right) ,\mathcal{B}=\left( 
\begin{array}{ccccc}
B & 0 & . & . & . \\ 
0 & 0 & . & . & . \\ 
. & . & . & . & . \\ 
. & . & . & . & .%
\end{array}%
\right) .  \label{def Ak}
\end{equation}%
Also let $\mathcal{D}_{k},\mathcal{F}_{k}:(%
\mathbb{R}
^{m})^{N}\rightarrow (%
\mathbb{R}
^{d})^{N},k=0,..,N-1$ be given by 
\begin{equation*}
\mathcal{D}_{k}\left( v_{0},v_{1},...,v_{N-1}\right) =\left(
Dv_{k},0,...,0\right) \in (%
\mathbb{R}
^{d})^{N}
\end{equation*}
and\ 
\begin{equation*}
\mathcal{F}_{k}\left( v_{0},v_{1},...,v_{N-1}\right) =\left(
Fv_{k},0,...,0\right) \in (%
\mathbb{R}
^{d})^{N},
\end{equation*}
for all $\left( v_{0},v_{1},...,v_{N-1}\right) \in (%
\mathbb{R}
^{m})^{N}$.

Similarly, for all $k=0,..,N-1$, we define $\mathcal{K}_{k}:(%
\mathbb{R}
^{m})^{N}\rightarrow (%
\mathbb{R}
^{m})^{N}$,$\mathcal{C}:(%
\mathbb{R}
^{d})^{N}\rightarrow (%
\mathbb{R}
^{p})^{N}$ 
\begin{eqnarray*}
\mathcal{K}_{k}\left( v_{0},v_{1},...,v_{N-1}\right) &=&\left(
0,...,Kv_{k},0,...,0\right) \in 
\mathbb{R}
^{m\times N} \\
\mathcal{C}\left( v_{0},v_{1},...,v_{N}-1\right) &=&\left(
Cv_{0},0,...,0\right)
\end{eqnarray*}

and $\mathcal{S}:(%
\mathbb{R}
^{d})^{N}\rightarrow $ $(%
\mathbb{R}
^{d})^{N}$ 
\begin{equation*}
\mathcal{S}\left( v_{0},...,v_{N-1}\right) =\left( Sv_{0},0,...,0\right) .
\end{equation*}%
Obviously, $\mathcal{K}_{k},\mathcal{S}\geq 0$. Let $%
x_{0},x_{1},...,x_{k},...$ be a solution of (\ref{ec c2}). For any $k<N%
\mathbf{,}X_{k}^{T}=\left( x_{k},x_{k-1},...,x_{0},0,..,\underset{N}{0}%
\right) \in $ $\left( 
\mathbb{R}
^{d}\right) ^{N}$ is a solution of the discrete-time system with independent
random perturbations%
\begin{eqnarray}
X_{k+1} &=&\mathcal{A}X_{k}+\xi _{k}\mathcal{B}X_{k}+\mathcal{D}%
_{k}U_{k}+\xi _{k}\mathcal{F}_{k}U_{k},  \label{sl 1} \\
X_{0} &=&\left( x_{0},0,...,\underset{N}{0}\right) ,  \label{ci sl1}
\end{eqnarray}%
where the control $U=\{U_{k}\}_{k\in 
\mathbb{N}
}\subset \left( 
\mathbb{R}
^{m}\right) ^{N}$ belongs to the set $\mathbb{U}^{a}$ of admissible controls
sequences $\{U_{k}\}_{k\in 
\mathbb{N}
}$ having the property that $U_{k}$ are $\left( 
\mathbb{R}
^{m}\right) ^{N}$-valued, $\mathcal{G}_{k}$-measurable random variables
satisfying $E\left[ \left\Vert U_{k}\right\Vert ^{2}\right] <\infty $ for
all $k\in 
\mathbb{N}
$. The system (\ref{sl 1})-(\ref{ci sl1}) is a classical linear
discrete-time control system with independent random perturbations. We know
(see, e.g \cite{Cdrag}) that for all $k\in 
\mathbb{N}
^{\ast }$, $X_{k}$ is $\mathcal{G}_{k}$-measurable and the pair $X_{k},\xi
_{n}$ is independent for all $n\geq k>0$.

Computing $X_{N}$ from (\ref{sl 1})-(\ref{ci sl1}),we note that $%
X_{N}=\left( x_{N},x_{N-1},...,x_{1}\right) $ and $x_{N}$, the $n$-th
solution of (\ref{ec c2})-(\ref{ci c2}), is the first component of $X_{N}$.
Then $\left\langle \mathcal{S}X_{N},X_{N}\right\rangle =\left\langle
Sx_{n},x_{n}\right\rangle .$ Also, for all $k<N$ we have 
\begin{eqnarray*}
\mathcal{C}X_{k} &=&\mathcal{C}\left( x_{k},x_{k-1},...,x_{0},0,..0\right)
=\left( Cx_{k},0,...,0\right) , \\
\mathcal{K}_{k}U_{k} &=&\mathcal{K}_{k}\left( \overline{u}_{0},\overline{u}%
_{1},...,u_{k}..,\overline{u}_{N-1}\right) =\left( 0,0,...,\underset{k+1}{%
Ku_{k}},...,0\right) .
\end{eqnarray*}

Then, the cost functional (\ref{cost funct}) can be equivalently rewritten
as 
\begin{equation}
I_{x_{0},N}(U)=E\left[ \sum\limits_{k=0}^{N-1}\left\langle \mathcal{C}^{\ast
}\mathcal{C}X_{k},X_{k}\right\rangle +\left\langle \mathcal{S}%
X_{N},X_{N}\right\rangle +\left\langle \mathcal{K}_{k}U_{k},U_{k}\right%
\rangle \right] .  \label{cost mod}
\end{equation}

Substituting $X_{N}$ given by (\ref{sl 1})-(\ref{ci sl1}) in (\ref{cost mod}%
), we get%
\begin{eqnarray}
I_{x_{0},N}(U) &=&\sum\limits_{n=0}^{N-2}E[\left\langle \mathcal{C}^{\ast }%
\mathcal{C}X_{k},X_{k}\right\rangle +\left\langle \mathcal{K}%
_{k}U_{k},U_{k}\right\rangle ]  \notag \\
&&+E[\left\langle \left( \mathcal{C}^{\ast }\mathcal{C}+\mathcal{A}^{\ast }%
\mathcal{SA+B}^{\ast }\mathcal{SB}\right) X_{N-1},X_{N-1}\right\rangle 
\notag \\
&&+2\left\langle \left( \mathcal{D}_{N-1}^{\ast }\mathcal{SA+F}_{N-1}^{\ast }%
\mathcal{SB}\right) X_{N-1},U_{N-1}\right\rangle +  \label{for cost} \\
&&\left\langle \left( \mathcal{K}_{N-1}+\mathcal{D}_{N-1}^{\ast }\mathcal{SD}%
_{N-1}\mathcal{+F}_{N-1}^{\ast }\mathcal{SF}_{N-1}\right)
U_{N-1},U_{N-1}\right\rangle ].  \notag
\end{eqnarray}

To obtain the last equality we have applied the property of $X_{N-1}$ and $%
U_{N-1}$ of being independent of $\xi _{N-1}$. Thus for any appropriate
deterministic linear operators $V$ and $T,$ we have 
\begin{gather*}
E\left[ \left\langle VX_{N-1}+\xi _{N-1}TU_{N-1},VX_{N-1}+\xi
_{N-1}TU_{N-1}\right\rangle \right] = \\
E\left[ \left\langle VX_{N-1},VX_{N-1}\right\rangle \right] +2E\left[ \xi
_{N-1}\right] E\left[ \left\langle VX_{N-1},TU_{N-1}\right\rangle \right] +
\\
E\left[ \xi _{N-1}^{2}\right] E\left[ \left\langle
TU_{N-1},TU_{N-1}\right\rangle \right] \\
=E\left[ \left\langle VX_{N-1},VX_{N-1}\right\rangle \right] +E\left[
\left\langle TU_{N-1},TU_{N-1}\right\rangle \right]
\end{gather*}%
and (\ref{for cost}) follows.

Now let $\mathbb{U}_{0,N-1}^{a}$ be the class of all finite segments $%
U_{0},...,U_{N-1}$ of sequences $U\in \mathbb{U}^{a}$. It is not difficult
to see that the optimal control problem $\mathcal{O}$ is equivalent with the
minimizing optimal control problem $\mathcal{O}_{1}$ defined by system (\ref%
{sl 1})-(\ref{ci sl1}), $I_{x_{0},N}(U)$ and $\mathbb{U}_{0,N-1}^{a}$.
Indeed, for any $u\in \mathcal{U}_{0,N-1}^{a}$, the segment $%
U=\{U_{k}=\left( 0,..,\underset{k+1}{u_{k}},...0\right) ,k=0,..,N-1\}$
belongs to $\mathbb{U}_{0,N-1}^{a}$ and $I_{x_{0},N}(U)=I_{x_{0},N}(u)$.
Conversely, given $U\in \mathbb{U}_{0,N-1}^{a},$ we define $%
u=\{u_{k}=U_{kk},k=0,..,N-1\}$. Thus, $u\in \mathcal{U}_{0,N-1}^{a}$ and $%
I_{x_{0},N}(U)=I_{x_{0},N}(u)$. Now it is clear that $\widetilde{U}$ is
optimal for $I_{x_{0},N}(U)$ if and only if $\widetilde{u}=\{\widetilde{u}%
_{k}=\widetilde{U}_{kk},k=0,..,N-1\}$ is optimal for $I_{x_{0},N}(u)$ and $%
I_{x_{0},N}(\widetilde{U})=I_{x_{0},N}(\widetilde{u})$.

The problem $\mathcal{O}_{1}$ is a linear quadratic optimal control problem
for stochastic systems. However $\mathcal{K}_{k}$ does not satisfy the
condition $\mathcal{K}_{k}>0$, $k=0,..,N-1$ and we cannot solve $\mathcal{O}%
_{1}$ by a direct application of the known results from the optimal control
theory of stochastic discrete-time systems (see \cite{Moore}, \cite{Cdrag}).

Therefore, we replace the optimal cost $I_{x_{0},N}(U)$ from $\mathcal{O}%
_{1} $ with the optimal cost 
\begin{eqnarray}
I_{x_{0},N,\varepsilon }(U) &=&\sum\limits_{k=0}^{N-2}E\left[ \left\langle 
\mathcal{C}^{\ast }\mathcal{C}X_{k},X_{k}\right\rangle \right] +E\left[
\left\langle \left( \mathcal{K}_{k}+\varepsilon \mathcal{I}_{k}\right)
U_{k},U_{k}\right\rangle \right]  \label{cost eps} \\
&&+E[\left\langle \left( \mathcal{C}^{\ast }\mathcal{C}+\mathcal{A}^{\ast }%
\mathcal{SA+B}^{\ast }\mathcal{SB}\right) X_{N-1},X_{N-1}\right\rangle 
\notag \\
&&+2\left\langle \left( \mathcal{D}_{N-1}^{\ast }\mathcal{SA+F}_{N-1}^{\ast }%
\mathcal{SB}\right) X_{N-1},U_{N-1}\right\rangle  \notag \\
&&+\left\langle \mathbb{K}_{N-1}^{\varepsilon }U_{N-1},U_{N-1}\right\rangle ]
\notag
\end{eqnarray}%
where $\varepsilon >0$ is fixed, 
\begin{equation}
\mathbb{K}_{N-1}^{\varepsilon }=\mathcal{K}_{N-1}+\varepsilon \mathcal{I}%
_{N-1}+\mathcal{D}_{N-1}^{\ast }\mathcal{SD}_{N-1}\mathcal{+F}_{N-1}^{\ast }%
\mathcal{SF}_{N-1}  \label{def K eps}
\end{equation}%
and $\mathcal{I}_{k}\left( v_{0},v_{1},...,v_{N-1}\right) =\left(
v_{0},v_{1},...,\underset{k+1}{0},...,v_{N-1}\right) ,k=0,..,N-1$. We obtain
a new optimal control problem $\mathcal{O}_{\varepsilon }$.

The hypothesis $K>0$, implies that $\mathcal{K}_{k}+\varepsilon \mathcal{I}%
_{k}>0$, for all $k=0,..,N-1$. Thus we can apply the classical results based
on the Principle of Optimality stating that the optimal cost is a quadratic
form in the state, with the weighting matrix computable via a recursion that
involves the solution of a backward discrete-time Riccati equation.

\subsection{Backward discrete-time Riccati equation of control}

We associate with $\mathcal{O}_{\varepsilon }$ the backward discrete-time
Riccati equation 
\begin{gather}
R_{n}^{\varepsilon }=\mathcal{A}^{\ast }R_{n+1}^{\varepsilon }\mathcal{A}+%
\mathcal{B}R_{n+1}^{\varepsilon }\mathcal{B}+\mathcal{C}^{\ast }\mathcal{C}%
-\left( \mathcal{D}_{n}^{\ast }R_{n+1}^{\varepsilon }\mathcal{A}+\mathcal{F}%
_{n}^{\ast }R_{n+1}^{\varepsilon }\mathcal{B}\right) ^{\ast }\cdot
\label{R(M,n)} \\
(\mathcal{K}_{n}+\varepsilon \mathcal{I}_{n}+\mathcal{D}_{n}^{\ast
}R_{n+1}^{\varepsilon }\mathcal{D}_{n}+\mathcal{F}_{n}^{\ast
}R_{n+1}^{\varepsilon }\mathcal{F}_{n})^{-1}\left( \mathcal{D}_{n}^{\ast
}R_{n+1}^{\varepsilon }\mathcal{A}+\mathcal{F}_{n}^{\ast
}R_{n+1}^{\varepsilon }\mathcal{B}\right) ,  \notag \\
\text{for~}n<N-1\text{ and}  \notag \\
R_{N-1}^{\varepsilon }=\mathcal{C}^{\ast }\mathcal{C}+\mathcal{A}^{\ast }%
\mathcal{SA+B}^{\ast }\mathcal{SB}-  \label{ci Ric} \\
\left( \mathcal{D}_{N-1}^{\ast }\mathcal{SA+F}_{N-1}^{\ast }\mathcal{SB}%
\right) ^{\ast }\cdot \left( \mathbb{K}_{N-1}^{\varepsilon }\right)
^{-1}\left( \mathcal{D}_{N-1}^{\ast }\mathcal{SA+F}_{N-1}^{\ast }\mathcal{SB}%
\right) .  \notag
\end{gather}

Setting 
\begin{equation}
W_{N-1}=-\left( \mathbb{K}_{N-1}^{\varepsilon }\right) ^{-1}\left( \mathcal{D%
}_{N-1}^{\ast }\mathcal{SA+F}_{N-1}^{\ast }\mathcal{SB}\right) ,
\label{def WN-1}
\end{equation}%
we observe that 
\begin{gather*}
R_{N-1}^{\varepsilon }=\mathcal{C}^{\ast }\mathcal{C}+\mathcal{A}^{\ast }%
\mathcal{SA+B}^{\ast }\mathcal{SB+}\left( \mathcal{D}_{N-1}^{\ast }\mathcal{%
SA+F}_{N-1}^{\ast }\mathcal{SB}\right) ^{\ast }W_{N-1}+ \\
W_{N-1}^{\ast }\left( \mathcal{D}_{N-1}^{\ast }\mathcal{SA+F}_{N-1}^{\ast }%
\mathcal{SB}\right) + \\
W_{N-1}^{\ast }\left( \mathcal{K}_{N-1}+\varepsilon \mathcal{I}_{N-1}+%
\mathcal{D}_{N-1}^{\ast }\mathcal{SD}_{N-1}\mathcal{+F}_{N-1}^{\ast }%
\mathcal{SF}_{N-1}\right) W_{N-1} \\
=\mathcal{C}^{\ast }\mathcal{C}+\left( \mathcal{A}+\mathcal{D}%
_{N-1}W_{N-1}\right) ^{\ast }\mathcal{S}\left( \mathcal{A}+\mathcal{D}%
_{N-1}W_{N-1}\right) + \\
\left( \mathcal{B}+\mathcal{F}_{N-1}W_{N-1}\right) ^{\ast }\mathcal{S}\left( 
\mathcal{B}+\mathcal{F}_{N-1}W_{N-1}\right) +W_{N-1}^{\ast }\left( \mathcal{K%
}_{N-1}+\varepsilon \mathcal{I}_{N-1}\right) W_{N-1}.
\end{gather*}%
Now it is clear that $R_{N-1}^{\varepsilon }\geq 0$. Denoting%
\begin{gather}
W_{n}=-(\mathcal{K}_{n}+\mathcal{D}_{n}^{\ast }R_{n+1}^{\varepsilon }%
\mathcal{D}_{n}+\mathcal{F}_{n}^{\ast }R_{n+1}^{\varepsilon }\mathcal{F}%
_{n}+\varepsilon I_{n})^{-1}\cdot  \label{def Wn} \\
\left( \mathcal{D}_{n}^{\ast }R_{n+1}^{\varepsilon }\mathcal{A}+\mathcal{F}%
_{n}^{\ast }R_{n+1}^{\varepsilon }\mathcal{B}\right) ,n=0,..,N-2,  \notag
\end{gather}%
and applying formula (4.8) from \cite{Ocam}, we obtain 
\begin{gather*}
R_{n}^{\varepsilon }=\left( \mathcal{A+D}_{n}W_{n}\right) ^{\ast
}R_{n+1}^{\varepsilon }\left( \mathcal{A+D}_{n}W_{n}\right) +\left( B+%
\mathcal{F}_{n}W_{n}\right) ^{\ast }R_{n+1}^{\varepsilon }\left( B+\mathcal{F%
}_{n}W_{n}\right) \\
+\mathcal{C}^{\ast }\mathcal{C}+W_{n}^{\ast }\left( \mathcal{K}%
_{n}+\varepsilon I_{n}\right) W_{n},n=0,..,N-2.
\end{gather*}%
Using the induction, we deduce that Riccati equation (\ref{R(M,n)}) has a
unique nonnegative solution $R_{n}^{\varepsilon },n=0,..,N-1$.

\begin{lemma}
\label{lem cost} The cost functional (\ref{cost eps}) can be equivalently
rewritten as 
\begin{gather}
I_{x_{0},N,\varepsilon }(U)=E\left[ \left\langle R_{0}^{\varepsilon
}X_{0},X_{0}\right\rangle \right] +  \label{form Cost ri} \\
\left\langle \mathbb{K}_{N-1}^{\varepsilon }\left(
W_{N-1}X_{N-1}-U_{N-1}\right) ,\left( W_{N-1}X_{N-1}-U_{N-1}\right)
\right\rangle + \\
\sum\limits_{k=0}^{N-2}E\left[ \left\Vert (\mathcal{K}_{n}+\mathcal{D}%
_{n}^{\ast }R_{n+1}^{\varepsilon }\mathcal{D}_{n}+\mathcal{F}_{n}^{\ast
}R_{n+1}^{\varepsilon }\mathcal{F}_{n}+\varepsilon \mathcal{I}%
_{n})^{1/2}\left( W_{n}X_{n}-U_{n}\right) \right\Vert ^{2}\right] ,  \notag
\end{gather}%
where $R_{n}^{\varepsilon }$ is the unique solution of (\ref{R(M,n)})-(\ref%
{ci Ric}).
\end{lemma}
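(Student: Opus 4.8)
The plan is to prove (\ref{form Cost ri}) by the classical \emph{completion of squares} argument, which converts the Riccati recursion (\ref{R(M,n)})--(\ref{ci Ric}) into a telescoping sum. First I would unify the boundary data with the recursion by setting $R_N^{\varepsilon}:=\mathcal{S}$. Comparing (\ref{def K eps}), (\ref{def WN-1}) and (\ref{ci Ric}) with (\ref{def Wn}) and (\ref{R(M,n)}), one checks that with this convention the operator $\mathbb{K}_{N-1}^{\varepsilon}$ equals $M_{N-1}$, that $W_{N-1}$ agrees with the general gain, and that (\ref{ci Ric}) is exactly the instance $n=N-1$ of (\ref{R(M,n)}); here I abbreviate $M_n:=\mathcal{K}_n+\varepsilon\mathcal{I}_n+\mathcal{D}_n^{\ast}R_{n+1}^{\varepsilon}\mathcal{D}_n+\mathcal{F}_n^{\ast}R_{n+1}^{\varepsilon}\mathcal{F}_n$ and $L_n:=\mathcal{D}_n^{\ast}R_{n+1}^{\varepsilon}\mathcal{A}+\mathcal{F}_n^{\ast}R_{n+1}^{\varepsilon}\mathcal{B}$, so that the Riccati equation reads $R_n^{\varepsilon}=\mathcal{A}^{\ast}R_{n+1}^{\varepsilon}\mathcal{A}+\mathcal{B}^{\ast}R_{n+1}^{\varepsilon}\mathcal{B}+\mathcal{C}^{\ast}\mathcal{C}-L_n^{\ast}M_n^{-1}L_n$ and $W_n=-M_n^{-1}L_n$ uniformly for $n=0,\dots,N-1$. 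I would also undo the $X_N$-substitution and write (\ref{cost eps}) in its pre-substitution form
\[ I_{x_{0},N,\varepsilon}(U)=E\left[\sum_{k=0}^{N-1}\left(\langle \mathcal{C}^{\ast}\mathcal{C}X_k,X_k\rangle+\langle(\mathcal{K}_k+\varepsilon\mathcal{I}_k)U_k,U_k\rangle\right)+\langle R_N^{\varepsilon}X_N,X_N\rangle\right], \]
which is legitimate because the independence identity displayed just before (\ref{for cost}) is exactly what passes between the two forms.

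The heart of the proof is a single per-step identity. Using the dynamics (\ref{sl 1}), the independence of the pair $(X_n,U_n)$ from $\xi_n$ recalled after (\ref{ci sl1}), and $E[\xi_n]=0$, $E[\xi_n^2]=1$, the cross terms carrying $\xi_n$ vanish and the $\xi_n^2$-terms acquire coefficient one, so that
\[ E[\langle R_{n+1}^{\varepsilon}X_{n+1},X_{n+1}\rangle]=E[\langle(\mathcal{A}^{\ast}R_{n+1}^{\varepsilon}\mathcal{A}+\mathcal{B}^{\ast}R_{n+1}^{\varepsilon}\mathcal{B})X_n,X_n\rangle+2\langle L_n X_n,U_n\rangle+\langle(M_n-\mathcal{K}_n-\varepsilon\mathcal{I}_n)U_n,U_n\rangle]. \]
Adding the stage cost $\langle \mathcal{C}^{\ast}\mathcal{C}X_n,X_n\rangle+\langle(\mathcal{K}_n+\varepsilon\mathcal{I}_n)U_n,U_n\rangle$, the $U_n$-quadratic part assembles into $\langle M_nU_n,U_n\rangle$; completing the square in $U_n$ and invoking $W_n=-M_n^{-1}L_n$ together with the Riccati identity for $R_n^{\varepsilon}$ yields
\[ E[\langle \mathcal{C}^{\ast}\mathcal{C}X_n,X_n\rangle+\langle(\mathcal{K}_n+\varepsilon\mathcal{I}_n)U_n,U_n\rangle+\langle R_{n+1}^{\varepsilon}X_{n+1},X_{n+1}\rangle]=E[\langle R_n^{\varepsilon}X_n,X_n\rangle]+E[\|M_n^{1/2}(W_nX_n-U_n)\|^2], \]
valid for every $n=0,\dots,N-1$; the square root $M_n^{1/2}$ exists since $M_n>0$ (because $\mathcal{K}_n+\varepsilon\mathcal{I}_n>0$ and $R_{n+1}^{\varepsilon}\geq0$, both established above).

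Finally I would telescope. Summing the per-step identity over $n=0,\dots,N-1$ cancels the consecutive forms $E[\langle R_n^{\varepsilon}X_n,X_n\rangle]$ and leaves $E[\langle R_N^{\varepsilon}X_N,X_N\rangle]-E[\langle R_0^{\varepsilon}X_0,X_0\rangle]$; since the summed left-hand side is exactly the pre-substitution expression for $I_{x_{0},N,\varepsilon}(U)$, rearranging gives
\[ I_{x_{0},N,\varepsilon}(U)=E[\langle R_0^{\varepsilon}X_0,X_0\rangle]+\sum_{n=0}^{N-1}E[\|M_n^{1/2}(W_nX_n-U_n)\|^2]. \]
Isolating the summand $n=N-1$ and using $M_{N-1}=\mathbb{K}_{N-1}^{\varepsilon}$ reproduces (\ref{form Cost ri}) term by term.

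The only genuinely delicate points are bookkeeping rather than conceptual. The first is exploiting the $\xi_n$-independence correctly, so that the stochastic quadratic form splits cleanly into its $\mathcal{A}$-part and $\mathcal{B}$-part; this is the single place where the multiplicative-noise structure really enters, and it is what makes the deterministic Riccati recursion applicable. The second is checking that the separately defined boundary objects (\ref{def K eps}), (\ref{def WN-1}), (\ref{ci Ric}) are precisely the $n=N-1$ instances of (\ref{def Wn}) and (\ref{R(M,n)}) under the convention $R_N^{\varepsilon}=\mathcal{S}$, so that the telescoping runs without a separate terminal case. Once these two points are in place, everything reduces to the standard deterministic completion-of-squares computation.
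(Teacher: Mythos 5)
Your proof is correct and rests on the same core argument as the paper's: the per-step completion-of-squares identity (using the independence of $(X_n,U_n)$ from $\xi_n$, $E[\xi_n]=0$, $E[\xi_n^2]=1$, and the Riccati recursion (\ref{R(M,n)}) with the gain (\ref{def Wn})), followed by telescoping. The only difference is bookkeeping at the terminal stage: the paper telescopes only over $n=0,\dots,N-2$ to obtain (\ref{calc I}) and then treats $n=N-1$ separately by expanding $E\left[\left\langle R_{N-1}^{\varepsilon}X_{N-1},X_{N-1}\right\rangle\right]$ via (\ref{ci Ric}), (\ref{def K eps}), (\ref{def WN-1}), whereas you fold that stage into the recursion by setting $R_N^{\varepsilon}:=\mathcal{S}$ and reverting (\ref{cost eps}) to its pre-substitution form --- a valid and arguably cleaner unification, since (\ref{ci Ric}) is indeed the $n=N-1$ instance of (\ref{R(M,n)}) and $\mathbb{K}_{N-1}^{\varepsilon}$, $W_{N-1}$ are the corresponding instances of the general weight and gain under that convention.
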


\begin{proof}
Let $X_{n+1}$ be defined by (\ref{sl 1}). We have%
\begin{gather*}
E\left[ \left\langle R_{n+1}^{\varepsilon }X_{n+1},X_{n+1}\right\rangle %
\right] =E[\left\langle \left( \mathcal{A}^{\ast }R_{n+1}^{\varepsilon }%
\mathcal{A}+\mathcal{B}R_{n+1}^{\varepsilon }\mathcal{B}\right)
X_{n},X_{n}\right\rangle + \\
2\left\langle \left( \mathcal{D}_{n}^{\ast }R_{n+1}^{\varepsilon }\mathcal{A}%
+\mathcal{F}_{n}^{\ast }R_{n+1}^{\varepsilon }\mathcal{B}\right)
X_{n},U_{n}\right\rangle \\
+\left\langle \left( \mathcal{D}_{n}^{\ast }R_{n+1}^{\varepsilon }\mathcal{D}%
_{n}+\mathcal{F}_{n}^{\ast }R_{n+1}^{\varepsilon }\mathcal{F}_{n}\right)
U_{n},U_{n}\right\rangle ]
\end{gather*}%
and, taking into account (\ref{R(M,n)}) and (\ref{def Wn}), we obtain%
\begin{gather*}
E\left[ \left\langle R_{n+1}^{\varepsilon }X_{n+1},X_{n+1}\right\rangle %
\right] =E[\left\langle R_{n}^{\varepsilon }X_{n},X_{n}\right\rangle
-\left\langle \mathcal{C}^{\ast }\mathcal{C}X_{n},X_{n}\right\rangle - \\
\left\langle (\mathcal{K}_{n}+\varepsilon \mathcal{I}_{n})U_{n},U_{n}\right%
\rangle ]- \\
+E[\left\langle (\mathcal{K}_{n}+\mathcal{D}_{n}^{\ast }R_{n+1}^{\varepsilon
}\mathcal{D}_{n}+\mathcal{F}_{n}^{\ast }R_{n+1}^{\varepsilon }\mathcal{F}%
_{n}+\varepsilon \mathcal{I}_{n})W_{n}X_{n},W_{n}X_{n}\right\rangle \\
-2E\left[ \left\langle (\mathcal{K}_{n}+\mathcal{D}_{n}^{\ast
}R_{n+1}^{\varepsilon }\mathcal{D}_{n}+\mathcal{F}_{n}^{\ast
}R_{n+1}^{\varepsilon }\mathcal{F}_{n}+\varepsilon \mathcal{I}%
_{n})W_{n}X_{n},U_{n}\right\rangle \right] \\
+E\left[ \left\langle (\mathcal{K}_{n}+\mathcal{D}_{n}^{\ast
}R_{n+1}^{\varepsilon }\mathcal{D}_{n}+\mathcal{F}_{n}^{\ast
}R_{n+1}^{\varepsilon }\mathcal{F}_{n}+\varepsilon \mathcal{I}%
_{n})U_{n},U_{n}\right\rangle \right] \\
=E\left[ \left\langle R_{n}^{\varepsilon }X_{n},X_{n}\right\rangle
-\left\langle \mathcal{C}^{\ast }\mathcal{C}X_{n},X_{n}\right\rangle
-\left\langle (\mathcal{K}_{n}+\varepsilon \mathcal{I}_{n})U_{n},U_{n}\right%
\rangle \right] \\
+E\left\langle (\mathcal{K}_{n}+\mathcal{D}_{n}^{\ast }R_{n+1}^{\varepsilon }%
\mathcal{D}_{n}+\mathcal{F}_{n}^{\ast }R_{n+1}^{\varepsilon }\mathcal{F}%
_{n}+\varepsilon \mathcal{I}_{n})\left( W_{n}X_{n}-U_{n}\right) ,\left(
W_{n}X_{n}-U_{n}\right) \right\rangle .
\end{gather*}%
for all $n=0,..,N-2$. Summing for $n=0$ to $N-2$ the last equality, we obtain%
\begin{gather}
E\left[ \left\langle R_{N-1}^{\varepsilon }X_{N-1},X_{N-1}\right\rangle %
\right] =E\left[ \left\langle R_{0}^{\varepsilon }X_{0},X_{0}\right\rangle %
\right] -  \label{calc I} \\
\sum\limits_{k=0}^{N-2}E\left[ \left\langle \mathcal{C}^{\ast }\mathcal{C}%
X_{k},X_{k}\right\rangle \right] +E\left[ \left\langle \left( \mathcal{K}%
_{k}+\varepsilon \mathcal{I}_{k}\right) U_{k},U_{k}\right\rangle \right] 
\notag \\
+\sum\limits_{k=0}^{N-2}E\left[ \left\Vert (\mathcal{K}_{n}+\mathcal{D}%
_{n}^{\ast }R_{n+1}^{\varepsilon }\mathcal{D}_{n}+\mathcal{F}_{n}^{\ast
}R_{n+1}^{\varepsilon }\mathcal{F}_{n}+\varepsilon \mathcal{I}%
_{n})^{1/2}\left( W_{n}X_{n}-U_{n}\right) \right\Vert ^{2}\right] .  \notag
\end{gather}%
On the other hand, from (\ref{ci Ric}), (\ref{def K eps}) and (\ref{def WN-1}%
), we know that 
\begin{gather*}
E\left[ \left\langle R_{N-1}^{\varepsilon }X_{N-1},X_{N-1}\right\rangle %
\right] =E[\left\langle \left( \mathcal{C}^{\ast }\mathcal{C}+\mathcal{A}%
^{\ast }\mathcal{SA+B}^{\ast }\mathcal{SB}\right)
X_{N-1},X_{N-1}\right\rangle \\
-\left\langle \mathbb{K}_{N-1}^{\varepsilon
}W_{N-1}X_{N-1},W_{N-1}X_{N-1}\right\rangle ] \\
=E[\left\langle \left( \mathcal{C}^{\ast }\mathcal{C}+\mathcal{A}^{\ast }%
\mathcal{SA+B}^{\ast }\mathcal{SB}\right) X_{N-1},X_{N-1}\right\rangle \\
-\left\langle \mathbb{K}_{N-1}^{\varepsilon }\left(
W_{N-1}X_{N-1}-U_{N-1}\right) ,\left( W_{N-1}X_{N-1}-U_{N-1}\right)
\right\rangle \\
+2\left\langle \left( \mathcal{D}_{N-1}^{\ast }\mathcal{SA+F}_{N-1}^{\ast }%
\mathcal{SB}\right) X_{N-1},U_{N-1}\right\rangle +\left\langle \mathbb{K}%
_{N-1}^{\varepsilon }U_{N-1},U_{N-1}\right\rangle ].
\end{gather*}%
Replacing the above formula in (\ref{calc I}), we obtain (\ref{form Cost ri}%
) and the conclusion follows.
\end{proof}

\subsection{Main results}

In this section we shall prove that problem $\mathcal{O}$ has a solution
derived from the solution of problem $\mathcal{O}_{\varepsilon }$.

\begin{proposition}
\label{minimul nu dep de eps} For all $\varepsilon >0$%
\begin{equation*}
\min_{U\in \mathbb{U}_{0,N-1}^{a}}I_{x_{0},N,\varepsilon }(U)=\min_{u\in 
\mathcal{U}_{0,N-1}^{a}}I_{x_{0},N}(u).
\end{equation*}
\end{proposition}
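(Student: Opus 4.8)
The plan is to prove the two inequalities $\min_U I_{x_{0},N,\varepsilon}(U)\le \min_u I_{x_{0},N}(u)$ and $\min_U I_{x_{0},N,\varepsilon}(U)\ge \min_u I_{x_{0},N}(u)$ separately, exploiting the fact that the perturbation $\varepsilon\mathcal{I}_k$ charges precisely those coordinates of the expanded control $U_k$ that are invisible to the original cost. First I would record the exact relation between the two functionals. Comparing (\ref{cost eps}) with (\ref{for cost}), the only change is the insertion of $\varepsilon\mathcal{I}_k$ into $\mathcal{K}_k$ (for $k=0,\dots,N-2$) and into $\mathbb{K}_{N-1}^{\varepsilon}$; since each $\mathcal{I}_k$ is an orthogonal projection, i.e. $\mathcal{I}_k=\mathcal{I}_k^{\ast}=\mathcal{I}_k^{2}\geq 0$, one gets, for every admissible $U$,
\[
I_{x_{0},N,\varepsilon}(U)=I_{x_{0},N}(U)+\varepsilon\sum_{k=0}^{N-1}E\big[\Vert\mathcal{I}_k U_k\Vert^{2}\big].
\]

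The second ingredient is the decoupling already used in the text when relating $\mathcal{O}$ and $\mathcal{O}_1$: the operators $\mathcal{D}_k,\mathcal{F}_k,\mathcal{K}_k$ and their terminal analogues act on $U_k$ only through its $k$-th component $U_{kk}$. Hence both the trajectory $\{X_n\}$ of (\ref{sl 1})--(\ref{ci sl1}) and the value $I_{x_{0},N}(U)$ depend on $U$ only through the diagonal entries $U_{kk}$; setting $u_k:=U_{kk}$ yields $u\in\mathcal{U}_{0,N-1}^{a}$ with $I_{x_{0},N}(U)=I_{x_{0},N}(u)$. Substituting into the displayed identity gives $I_{x_{0},N,\varepsilon}(U)=I_{x_{0},N}(u)+\varepsilon\sum_k E\Vert\mathcal{I}_k U_k\Vert^{2}\ge I_{x_{0},N}(u)\ge \min_{u'\in\mathcal{U}_{0,N-1}^{a}}I_{x_{0},N}(u')$, and taking the infimum over $U$ establishes the lower bound.

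For the reverse bound I would take an optimal $\widetilde u$ for $\mathcal{O}$ and embed it as $\widetilde U_k=(0,\dots,\widetilde u_k,\dots,0)$. Then $\mathcal{I}_k\widetilde U_k=0$ for every $k$, so the penalty term vanishes and $I_{x_{0},N,\varepsilon}(\widetilde U)=I_{x_{0},N}(\widetilde U)=I_{x_{0},N}(\widetilde u)=\min_u I_{x_{0},N}(u)$. As $\widetilde U\in\mathbb{U}_{0,N-1}^{a}$, this forces $\min_U I_{x_{0},N,\varepsilon}(U)\le \min_u I_{x_{0},N}(u)$, and with the lower bound the two minima coincide for every $\varepsilon>0$. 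The left-hand minimum is genuinely attained and computable: Lemma \ref{lem cost} gives $\min_U I_{x_{0},N,\varepsilon}(U)=E[\langle R_{0}^{\varepsilon}X_{0},X_{0}\rangle]$, realized by the feedback $U_n=W_nX_n$, which is what makes the ``$\min$'' meaningful; as a by-product the equality shows $E[\langle R_{0}^{\varepsilon}X_{0},X_{0}\rangle]$ is independent of $\varepsilon$.

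The main obstacle is conceptual rather than computational: recognizing the orthogonal split of each $U_k$ into the single diagonal coordinate $U_{kk}$, which alone drives the dynamics and the original cost, and the remaining coordinates, which are invisible to $I_{x_{0},N}$ but are exactly what $\varepsilon\mathcal{I}_k$ penalizes. The subtlety to avoid is that the $\varepsilon$-optimal feedback $U_n=W_nX_n$ does depend on $\varepsilon$ and in general does not lie in the embedded subspace, so one should not attempt to prove $\varepsilon$-independence by computing $R_{0}^{\varepsilon}$ directly; the sandwich argument bypasses this, and in fact shows a posteriori that any minimizer of $I_{x_{0},N,\varepsilon}$ must have vanishing off-diagonal coordinates.
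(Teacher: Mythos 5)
Your argument is, at its core, the same two-inequality embedding/restriction proof as the paper's: one direction by embedding a scalar control as $U_k=(0,\ldots,u_k,\ldots,0)$, so that the $\varepsilon$-penalty vanishes and $I_{x_0,N,\varepsilon}(U)=I_{x_0,N}(u)$, and the other by passing to the diagonal $u_k=U_{kk}$ and using $I_{x_0,N}(u)=I_{x_0,N}(U)\le I_{x_0,N,\varepsilon}(U)$. Your explicit penalty identity $I_{x_0,N,\varepsilon}(U)=I_{x_0,N}(U)+\varepsilon\sum_{k}E\left[ \left\Vert \mathcal{I}_kU_k\right\Vert^2\right]$ is a clean way of packaging what the paper uses implicitly, and your lower bound is in fact tidier than the paper's, since it needs no optimizer at all.

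There is, however, one genuine (though easily repaired) logical flaw: in the upper bound you ``take an optimal $\widetilde u$ for $\mathcal{O}$''. At this point of the development nothing guarantees that $\min_{u\in\mathcal{U}_{0,N-1}^{a}}I_{x_0,N}(u)$ is attained; attainment for $\mathcal{O}$ is precisely what Theorem \ref{t opt} later deduces \emph{from} this proposition, so invoking it here is circular. The paper distributes the roles the other way around: it embeds an \emph{arbitrary} $u$, which gives $\min_{U}I_{x_0,N,\varepsilon}(U)\le I_{x_0,N}(u)$ for every $u$ and hence the upper bound with no attainment needed, and it uses an optimizer only on the expanded side, namely $\widetilde U^{\varepsilon}$, whose existence is guaranteed by the Riccati construction of Lemma \ref{lem cost}; as a by-product, the diagonal of $\widetilde U^{\varepsilon}$ is then shown to attain the minimum of $I_{x_0,N}$, i.e., attainment for $\mathcal{O}$ comes out as a conclusion rather than going in as a hypothesis. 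Your proof is fixed by exactly this change: embed an arbitrary (or $\delta$-near-optimal) $u$ instead of an optimal one. A small side remark: your closing comments contradict each other --- if, as you correctly deduce, every minimizer of $I_{x_0,N,\varepsilon}$ must have vanishing off-diagonal components, then the optimal $\widetilde U_n=W_nX_n$ does lie (almost surely, along the optimal trajectory) in the embedded subspace, contrary to what you assert just before.
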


\begin{proof}
Let $u=\{u_{0},..,u_{N-1}\}\in \mathcal{U}_{0,N-1}^{a}$. If $%
U=\{U_{0},..,U_{N-1}\},$

$U_{k}=\left( 0,..,\underset{k}{u_{k}},...0\right) $, then $U\in \mathbb{U}%
_{0,N-1}^{a}$ and $I_{x_{0},N,\varepsilon }(U)=I_{x_{0},N}(u)$. Thus 
\begin{equation}
\min_{u\in \mathcal{U}_{0,N-1}^{a}}I_{x_{0},N}(u)\geq \min_{U\in \mathbb{U}%
}I_{x_{0},N,\varepsilon }(U).  \label{prima}
\end{equation}%
On the other hand if $U\in \mathbb{U}_{0,N-1}^{a}$ and $u=\{u_{0},..,u_{N-1}%
\}$ is defined by $u_{k}=U_{kk},k=0,1,..,N-1,$ then $u\in \mathcal{U}%
_{0,N-1}^{a}$ and $I_{x_{0},N}(u)=I_{x_{0},N}(U)\leq I_{x_{0},N,\varepsilon
}(U)$. Replacing $U$ in the above inequality by $\widetilde{U}^{\varepsilon
} $ the optimal control which minimizes $I_{x_{0},N,\varepsilon }(U)$ (we
know that it exists), we see that $I_{x_{0},N}(\widetilde{u})\leq
I_{x_{0},N,\varepsilon }(\widetilde{U}^{\varepsilon })=\min_{U\in \mathbb{U}%
_{0,N-1}^{a}}I_{x_{0},N,\varepsilon }(U)$, where $\widetilde{u}=\{\widetilde{%
u}_{0},..,\widetilde{u}_{N-1}\}$ and $\widetilde{u}_{k}=\widetilde{U}_{kk}$.
Therefore $\min_{u\in \mathcal{U}_{0,N-1}^{a}}I_{x_{0},N}(u)\leq \min_{U\in 
\mathbb{U}_{0,N-1}^{a}}I_{x_{0},N,\varepsilon }(U)$. In view of (\ref{prima}%
) we get the conclusion.
\end{proof}

The next theorem is a direct consequence of Lemma \ref{lem cost} and of the
above proposition.

\begin{theorem}
\label{t opt}Let $\{R_{n}^{\varepsilon }\}_{n=0,..,N-1}$ be the unique
solution of the Riccati equation (\ref{R(M,n)})-(\ref{ci Ric}) and let $%
W_{n},n=0,..,N-1$ be defined by (\ref{def Wn}), (\ref{def WN-1}). The
control sequence $\widetilde{U}=\{\widetilde{U}_{0}=W_{0}X_{0},...,%
\widetilde{U}_{n}=W_{n}X_{n},...,\widetilde{U}_{N-1}=W_{N-1}X_{N-1}\}$
minimizes the cost functional $I_{x_{0},N,\varepsilon }(U)$ and $\min_{U\in 
\mathbb{U}_{0,N-1}^{a}}I_{x_{0},N,\varepsilon }(U)=E\left[ \left\langle
R_{0}^{\varepsilon }X_{0},X_{0}\right\rangle \right] $.

Moreover, $\{R_{n}^{\varepsilon }\}_{n=0,..,N-1}$ does not depend on $%
\varepsilon $ and the control

$\widetilde{u}=\{\widetilde{u}_{0},...,\widetilde{u}_{N-1}\}$, defined by $%
\widetilde{u}_{k}=\widetilde{U}_{kk},k=0,1,..,N-1$ is also optimal for $%
I_{x_{0},N}(u)$. The optimal cost is 
\begin{equation*}
\min_{u\in \mathcal{U}_{0,N-1}^{a}}I_{x_{0},N}(u)=E\left[ \left\langle
R_{0}^{\varepsilon }X_{0},X_{0}\right\rangle \right] .
\end{equation*}
\end{theorem}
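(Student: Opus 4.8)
The plan is to obtain everything by reading off the completed-square identity of Lemma \ref{lem cost} and then invoking Proposition \ref{minimul nu dep de eps}; no fresh computation is needed, only a positivity audit and the dictionary between the two classes of controls.

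First I would prove the optimality of $\widetilde{U}$ for $I_{x_{0},N,\varepsilon}$. By Lemma \ref{lem cost}, $I_{x_{0},N,\varepsilon}(U)$ equals the $U$-independent constant $E[\langle R_{0}^{\varepsilon}X_{0},X_{0}\rangle]$ plus the two remainder terms built from $W_{N-1}X_{N-1}-U_{N-1}$ and the $W_{n}X_{n}-U_{n}$. The crux is that both remainder weights are positive: $\mathbb{K}_{N-1}^{\varepsilon}>0$ and $\mathcal{K}_{n}+\mathcal{D}_{n}^{\ast}R_{n+1}^{\varepsilon}\mathcal{D}_{n}+\mathcal{F}_{n}^{\ast}R_{n+1}^{\varepsilon}\mathcal{F}_{n}+\varepsilon\mathcal{I}_{n}>0$ for every $n$, since $\mathcal{K}_{n}+\varepsilon\mathcal{I}_{n}>0$ (the hypothesis $K>0$, as noted after \eqref{def K eps}) while the remaining summands are nonnegative because $\mathcal{S}\geq0$ and $R_{n+1}^{\varepsilon}\geq0$ (established just after \eqref{def Wn}). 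Hence each remainder is $\geq 0$ and $I_{x_{0},N,\varepsilon}(U)\geq E[\langle R_{0}^{\varepsilon}X_{0},X_{0}\rangle]$ for all admissible $U$. The feedback choice $\widetilde{U}_{n}=W_{n}X_{n}$ annihilates every factor $W_{n}X_{n}-U_{n}$, so all remainders vanish and the lower bound is attained; I would also check that $\widetilde{U}$ is admissible: the closed-loop recursion $X_{n+1}=(\mathcal{A}+\mathcal{D}_{n}W_{n})X_{n}+\xi_{n}(\mathcal{B}+\mathcal{F}_{n}W_{n})X_{n}$ keeps each $X_{n}$ $\mathcal{G}_{n}$-measurable and square-integrable, whence $W_{n}X_{n}$ is an admissible control value. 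This yields $\min_{U}I_{x_{0},N,\varepsilon}(U)=E[\langle R_{0}^{\varepsilon}X_{0},X_{0}\rangle]$.

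Next I would transfer optimality to problem $\mathcal{O}$. Proposition \ref{minimul nu dep de eps} gives $\min_{U}I_{x_{0},N,\varepsilon}(U)=\min_{u}I_{x_{0},N}(u)$, so the previous step already delivers the cost formula $\min_{u}I_{x_{0},N}(u)=E[\langle R_{0}^{\varepsilon}X_{0},X_{0}\rangle]$. For the control itself I would reuse the reduction from the proof of that proposition: putting $\widetilde{u}_{k}=\widetilde{U}_{kk}$ produces $\widetilde{u}\in\mathcal{U}_{0,N-1}^{a}$ with $I_{x_{0},N}(\widetilde{u})\leq I_{x_{0},N,\varepsilon}(\widetilde{U})=\min_{U}I_{x_{0},N,\varepsilon}(U)=\min_{u}I_{x_{0},N}(u)$, which forces equality and hence optimality of $\widetilde{u}$.

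The one point that genuinely deserves care, and which I expect to be the main obstacle, is the claim that the whole family $\{R_{n}^{\varepsilon}\}$ is independent of $\varepsilon$, rather than only the scalar $E[\langle R_{0}^{\varepsilon}X_{0},X_{0}\rangle]$. The proposition alone forces only the map $x_{0}\mapsto\langle R_{0}^{\varepsilon}(x_{0},0,\dots,0),(x_{0},0,\dots,0)\rangle$ to be $\varepsilon$-free, i.e.\ it pins down just the leading $d\times d$ block of $R_{0}^{\varepsilon}$. To get the full statement I would instead induct backward on the recursion \eqref{R(M,n)}--\eqref{ci Ric}, exploiting the block structure: $\mathcal{K}_{n}+\varepsilon\mathcal{I}_{n}+\mathcal{D}_{n}^{\ast}R_{n+1}^{\varepsilon}\mathcal{D}_{n}+\mathcal{F}_{n}^{\ast}R_{n+1}^{\varepsilon}\mathcal{F}_{n}$ is block diagonal, equal to $K+D^{\ast}(R_{n+1}^{\varepsilon})_{11}D+F^{\ast}(R_{n+1}^{\varepsilon})_{11}F$ on the $n$-th block and to $\varepsilon I$ on every other block, whereas $\mathcal{D}_{n}^{\ast}R_{n+1}^{\varepsilon}\mathcal{A}+\mathcal{F}_{n}^{\ast}R_{n+1}^{\varepsilon}\mathcal{B}$ is supported only on the $n$-th block-row. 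Consequently the correction term in \eqref{R(M,n)} uses only the $n$-th diagonal block of the inverse, where the $\varepsilon$'s do not appear, and never the $\varepsilon^{-1}I$ blocks; thus the explicit $\varepsilon$ drops out and $R_{n}^{\varepsilon}$ depends on $\varepsilon$ only through $R_{n+1}^{\varepsilon}$. Since the base case $R_{N-1}^{\varepsilon}$ is already $\varepsilon$-free by the same cancellation in \eqref{ci Ric}, the induction delivers $\varepsilon$-independence of the entire family, after which all the displayed formulas are legitimately written with any fixed $\varepsilon>0$.
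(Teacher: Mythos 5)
Your proof is correct. Note first that the paper offers no actual proof of this theorem: it declares the result a direct consequence of Lemma \ref{lem cost} and Proposition \ref{minimul nu dep de eps} and leaves the verification as ``a simple exercise for the reader,'' so there is nothing to compare line by line. Your first two steps are precisely that intended verification, carried out with the right care: positivity of the two remainder weights in the completed-square identity (from $K>0$, $\mathcal{S}\geq 0$ and $R_{n+1}^{\varepsilon }\geq 0$), admissibility of the closed-loop feedback $\widetilde{U}_{n}=W_{n}X_{n}$, and the transfer $I_{x_{0},N}(\widetilde{u})\leq I_{x_{0},N,\varepsilon }(\widetilde{U})=\min_{U}I_{x_{0},N,\varepsilon }(U)=\min_{u}I_{x_{0},N}(u)$, which forces optimality of $\widetilde{u}$. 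The genuinely valuable part of your proposal is the third claim: you are right that the $\varepsilon $-independence of the \emph{whole family} $\{R_{n}^{\varepsilon }\}$ does not follow from the lemma and the proposition alone, since those only force the scalar $\left\langle R_{0}^{\varepsilon }X_{0},X_{0}\right\rangle $, i.e.\ the leading $d\times d$ block of $R_{0}^{\varepsilon }$, to be $\varepsilon $-free. Your backward induction on (\ref{R(M,n)})--(\ref{ci Ric}), exploiting that $\mathcal{K}_{n}+\varepsilon \mathcal{I}_{n}+\mathcal{D}_{n}^{\ast }R_{n+1}^{\varepsilon }\mathcal{D}_{n}+\mathcal{F}_{n}^{\ast }R_{n+1}^{\varepsilon }\mathcal{F}_{n}$ is block diagonal with its $\varepsilon $-blocks located exactly where $\mathcal{D}_{n}^{\ast }R_{n+1}^{\varepsilon }\mathcal{A}+\mathcal{F}_{n}^{\ast }R_{n+1}^{\varepsilon }\mathcal{B}$ (supported on the $n$-th block row only) never reaches, so that the inverse enters only through its $\varepsilon $-free $(n,n)$ block, is the correct and, as far as the paper is concerned, missing argument; it also shows that each $W_{n}$ is itself $\varepsilon $-free, which is what makes the theorem's phrasing fully legitimate. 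In short: you follow the skeleton the paper intends, and you supply a block-structure induction that fills a real gap which the two cited ingredients cannot close on their own.
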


\begin{proof}
The proof is a simple exercise for the reader.
\end{proof}

The following numerical example illustrates the applicability of the theory.

\begin{example}
\label{exp 1} Let $\alpha =\frac{1}{2},h=1,d=2,m=1,x_{0}=\left( 
\begin{array}{c}
0.2 \\ 
0.3%
\end{array}%
\right) $ and $\mathbb{A=}\left( 
\begin{array}{cc}
1 & 0 \\ 
1 & 0%
\end{array}%
\right) $, $\mathbb{B=}\left( 
\begin{array}{cc}
1 & 2 \\ 
0 & 1%
\end{array}%
\right) ,\mathbb{D}=\left( \ 
\begin{array}{c}
1 \\ 
-1%
\end{array}%
\right) ,\mathbb{F}=\left( \ 
\begin{array}{c}
2 \\ 
1%
\end{array}%
\right) ,C=\left( \ 
\begin{array}{cc}
2 & -1%
\end{array}%
\right) ,K=1,S=\left( 
\begin{array}{cc}
2 & 0 \\ 
0 & 2%
\end{array}%
\right) .$ Then $A_{0}=\left( 
\begin{array}{cc}
3/2 & 0 \\ 
1 & 1/2%
\end{array}%
\right) $ and $T=\mathbb{T}$ for $\mathbb{T}=\mathbb{B}$, $\mathbb{D}$, $%
\mathbb{F}$. We consider the optimal control problem $\mathcal{O}$ for $N=4$%
. Using a computer program, we compute in four simple steps the solution $%
R_{0}^{\varepsilon }$ of the Riccati equation (\ref{R(M,n)})-(\ref{ci Ric}).
The first five lines and columns of the matrix that defines the operator $%
R_{0}^{\varepsilon }$ are the following%
\begin{equation*}
\left( 
\begin{array}{cccccc}
266.5781 & 33.3776 & 14.1358 & 16.6723 & 7.5490 & . \\ 
33.3776 & 149.0853 & -9.7899 & 8.6506 & -4.8235 & . \\ 
14.1358 & -9.7899 & 1.7822 & 0.4176 & 0.9168 & . \\ 
16.6723 & 8.6506 & 0.4176 & 1.3910 & 0.2413 & . \\ 
7.5490 & -4.8235 & 0.9168 & 0.2413 & 0.4730 & . \\ 
. & . & . & . & . & .%
\end{array}%
\right) 
\end{equation*}
\end{example}

Using the intermediate values $R_{n}^{\varepsilon }$, (\ref{def Wn}) and
Theorem \ref{t opt}, we obtain the following optimal control sequence 
\begin{eqnarray}
\widetilde{u}_{3} &=&-0.3333x_{31}-0.6000x_{32}-0.0167x_{21}+0.0167x_{22}-
\label{U3} \\
&&0.0083x_{11}+0.0083x_{12}-0.0052x_{01}+0.0052x_{00}  \notag \\
\widetilde{u}_{2} &=&-0.4788x_{21}-0.6894x_{22}-0.0234x_{11}+0.0109x_{12}-
\label{U2} \\
&&0.0121x_{01}+0.0054x_{02}  \notag \\
\widetilde{u}_{1} &=&-0.4563x_{11}-0.7023x_{12}-0.0253x_{01}+0.0107x_{02}
\label{U1} \\
\widetilde{u}_{0} &=&-0.4582x_{01}-0.7484x_{02},  \label{U0}
\end{eqnarray}%
where $x_{n}=\left( 
\begin{array}{c}
x_{n1} \\ 
x_{n2}%
\end{array}%
\right) ,n=1,2,3$ is the state vector of the fractional system. The optimal
cost is 
\begin{equation*}
\min_{u\in \mathcal{U}_{0,N-1}^{a}}I_{x_{0},N}(u)=x_{0}^{T}\left( 
\begin{array}{cc}
266.5781 & 33.3776 \\ 
33.3776 & 149.0853%
\end{array}%
\right) x_{0}=28.086.
\end{equation*}

\section{A dynamic programming approach for the \protect\linebreak %
fractional system}

In this section we apply the Principle of Optimality to derive a direct
algorithm for solving the optimal control problem $\mathcal{O}$. As in \cite%
{polonezi}, the optimal control is a state feedback law, computable via a
recursion commencing at the terminal time and evolving backwards. The
obtained result is a stochastic counterpart of the one provided in \cite%
{polonezi}\ for deterministic fractional systems.

\begin{center}
{\LARGE Algorithm A}
\end{center}

Consider the optimal control problem $\mathcal{O}$. An optimal control
process $P_{0}$ is defined by the control policy $u=\{u_{k}\}_{k\in
\{0,..,N-1\}}$ and the corresponding trajectory $x=\{x_{k}\}_{k\in
\{0,..,N\}}.$ Let 
\begin{eqnarray*}
P_{m} &:&x_{m},x_{m+1}..,x_{N-1},x_{N} \\
&&u_{m},..,u_{N-2},u_{N-1}.
\end{eqnarray*}%
be a final segment of $P_{0}$ starting at a time $t=m,$ when system (\ref{ec
c2}) is in the state $x_{m}$ obtained from the initial state $x_{0}$ with
the optimal control sequence $u_{0},..,u_{m-1}$. The performance functional
on this final segment is 
\begin{gather}
I_{m,x_{0},..,x_{m}}(u)=  \label{cost K} \\
\sum\limits_{n=m}^{N-1}E\left[ \left( \left\Vert Cx_{n}\right\Vert
^{2}+<Ku_{n},u_{n}>\right) \right] +E<Sx_{N},x_{N}>.  \notag
\end{gather}%
The Principle of Optimality says that any final segment $P_{m}$ of $P_{0}$
must be optimal for $I_{m,x_{0},..,x_{m}}$.

Thus for $m=N-1$, the process 
\begin{eqnarray*}
P_{N-1} &:&x_{N-1},x_{N} \\
&&u_{N-1}
\end{eqnarray*}%
should be optimal for the cost 
\begin{eqnarray}
I_{N-1,x_{0},..,x_{N-1}}(u) &=&E\left[ \left( \left\Vert Cx_{N-1}\right\Vert
^{2}+\left\langle Ku_{N-1},u_{N-1}\right\rangle \right) \right] +
\label{form I N-1} \\
&&E\left[ \left\langle Sx_{N},x_{N}\right\rangle \right] .  \notag
\end{eqnarray}%
This condition and the following computations leads to a formula for the
optimal control $u_{N-1}$. Writing (\ref{ec c2}) for $k=N-1$, we obtain $%
x_{N}$. Substituting $x_{N}$ in (\ref{form I N-1}), we get 
\begin{gather}
I_{N-1,x_{0},..,x_{N-1}}(u)=  \label{form cost} \\
=E<S(\dsum\limits_{j=0}^{N-1}A_{j}x_{N-1-j}+\xi _{N-1}Bx_{N-1}+Du_{N-1}+\xi
_{N-1}Fu_{N-1}),  \notag \\
\dsum\limits_{j=0}^{N-1}A_{j}x_{N-1-j}+\xi _{N-1}Bx_{N-1}+Du_{N-1}+\xi
_{N-1}Fu_{N-1}>+  \notag \\
+E\left[ \left( \left\Vert Cx_{N-1}\right\Vert
^{2}+<Ku_{N-1},u_{N-1}>\right) \right]  \notag
\end{gather}

Since $x_{n},u_{n}$ are $\mathcal{G}_{n}$-measurable and $\xi _{p}$%
-independent for all $p\geq n,n,p\in 
\mathbb{N}
$, we have 
\begin{eqnarray*}
E\left[ \left\langle Tx_{i},\xi _{n}Sv_{n},\right\rangle \right] &=&E\left[
\xi _{n}\right] E\left[ \left\langle Tx_{i},Sv_{n},\right\rangle \right] =0,
\\
E\left[ \left\langle \xi _{n}Tv_{n},\xi _{n}Sv_{n},\right\rangle \right] &=&E%
\left[ \xi _{n}^{2}\right] E\left[ \left\langle Tv_{n},Sv_{n},\right\rangle %
\right] =E\left[ \left\langle Tv_{n},Sv_{n},\right\rangle \right]
\end{eqnarray*}%
for all $i\leq n$, $v=u,x$ and $S,T$ matrices of appropriate dimensions.
Therefore,%
\begin{gather*}
I_{N-1,x_{0},..,x_{N-1}}(u)=E\left\langle
S\dsum\limits_{j=0}^{N-1}A_{j}x_{N-1-j},\dsum%
\limits_{j=0}^{N-1}A_{j}x_{N-1-j}\right\rangle + \\
E\left\langle SBx_{N-1},Bx_{N-1}\right\rangle +2E\left\langle D^{\ast
}S\dsum\limits_{j=0}^{N-1}A_{j}x_{N-1-j},u_{N-1}\right\rangle + \\
E\left\langle F^{\ast }SFu_{N-1},u_{N-1}\right\rangle +2E\left\langle
F^{\ast }SBx_{N-1},u_{N-1}\right\rangle \\
+E\left\langle D^{\ast }SDu_{N-1},u_{N-1}\right\rangle +E\left[ \left(
\left\Vert Cx_{N-1}\right\Vert ^{2}+\left\langle
Ku_{N-1},u_{N-1}\right\rangle \right) \right] \\
=E\left[ \left\Vert \sqrt{S}\dsum\limits_{j=0}^{N-1}A_{j}x_{N-1-j}\right%
\Vert ^{2}\right] +E\left[ \left\Vert \sqrt{S}Bx_{N-1}\right\Vert ^{2}\right]
+E\left[ \left\Vert Cx_{N-1}\right\Vert ^{2}\right] \\
+2E\left\langle D^{\ast }S\dsum\limits_{j=1}^{N-1}A_{j}x_{N-1-j}+\left(
F^{\ast }SB+D^{\ast }SA_{0}\right) x_{N-1},u_{N-1}\right\rangle \\
+E\left\langle \left( F^{\ast }SF+D^{\ast }SD+K\right)
u_{N-1},u_{N-1}\right\rangle .
\end{gather*}%
Setting 
\begin{gather}
v_{N-1}\left( x_{0},..,x_{N-1}\right) =D^{\ast
}S\dsum\limits_{j=1}^{N-1}A_{j}x_{N-1-j}+\left( F^{\ast }SB+D^{\ast
}SA_{0}\right) x_{N-1},  \notag \\
w_{N-1}\left( x_{0},..,x_{N-1}\right) =E\left[ \left\Vert \sqrt{S}%
\dsum\limits_{j=0}^{N-1}A_{j}x_{N-1-j}\right\Vert ^{2}\right]  \label{coef}
\\
+E\left[ \left\Vert \sqrt{S}Bx_{N-1}\right\Vert ^{2}\right] +E\left[
\left\Vert Cx_{N-1}\right\Vert ^{2}\right]  \notag \\
J_{N-1}=F^{\ast }SF+D^{\ast }SD+K>0  \notag
\end{gather}

and using a squares completion technique, we see that%
\begin{gather*}
I_{N-1,x_{0},..,x_{N-1}}(u)=w_{N-1}\left( x_{0},..,x_{N-1}\right) + \\
2E\left\langle v_{N-1}\left( x_{0},..,x_{N-1}\right) ,u_{N-1}\right\rangle
+E\left\langle J_{N-1}u_{N-1},u_{N-1}\right\rangle .
\end{gather*}%
The cost functional $I_{N-1,x_{0},..,x_{N-1}}(u)$ can be equivalently
rewritten as%
\begin{eqnarray*}
I_{N-1,x_{0},..,x_{N-1}}(u) &=&E\left\langle J_{N-1}\left(
u_{N-1}+J_{N-1}^{-1}v_{N-1}\right) ,\left(
u_{N-1}+J_{N-1}^{-1}v_{N-1}\right) \right\rangle \\
&&+w_{N-1}-E\left\langle J_{N-1}^{-1}v_{N-1},v_{N-1}\right\rangle .
\end{eqnarray*}%
As a function of $u_{N-1}$, $I_{N-1,x_{0},..,x_{N-1}}(u)$ is optimal for 
\begin{equation}
u_{N-1}^{\ast }=-J_{N-1}^{-1}v_{N-1}  \label{contr optim}
\end{equation}%
and its optimal value is 
\begin{equation}
\min_{u_{N-1}\in \mathcal{U}%
_{N-1,N-1}^{a}}I_{N-1,x_{0},..,x_{N-1}}(u)=w_{N-1}-E\left\langle
J_{N-1}^{-1}v_{N-1},v_{N-1}\right\rangle .  \label{cost optim}
\end{equation}

In view of (\ref{coef}), 
\begin{equation}
u_{N-1}^{\ast }\left( x_{0},..,x_{N-1}\right)
=\dsum\limits_{j=0}^{N-1}W_{j,N-1}x_{N-1-j},  \label{form U N-1}
\end{equation}%
where 
\begin{eqnarray}
W_{j,N-1} &=&-\left( F^{\ast }SF+D^{\ast }SD+K\right) ^{-1}D^{\ast }SA_{j}%
\text{, }j\in \{1,..,N-1\}  \label{rec -1} \\
W_{0,N-1} &=&-\left( F^{\ast }SF+D^{\ast }SD+K\right) ^{-1}\left( F^{\ast
}SB+D^{\ast }SA_{0}\right) .  \notag
\end{eqnarray}%
From the above proof we deduce that $u_{N-1}^{\ast }\left(
x_{0},..,x_{N-1}\right) =\dsum\limits_{j=0}^{N-1}W_{j,N-1}x_{N-1-j}$ is
optimal for $I_{N-1}(u)$ for any trajectory $\left( x_{0},..,x_{N-1}\right) $%
. Substituting (\ref{contr optim}) to (\ref{form cost}) we obtain%
\begin{gather*}
\min_{u_{N-1}\in \mathcal{U}_{N-1,N-1}^{a}}I_{N-1,x_{0},..,x_{N-1}}(u)=%
\mathcal{O}(x_{0},..,x_{N-1})= \\
E<S(\dsum\limits_{j=0}^{N-1}A_{j}x_{N-1-j}+\xi _{N-1}Bx_{N-1}+\left( D+\xi
_{N-1}F\right) \dsum\limits_{j=0}^{N-1}W_{j,N-1}x_{N-1-j}), \\
\dsum\limits_{j=0}^{N-1}A_{j}x_{N-1-j}+\xi _{N-1}Bx_{N-1}+\left( D+\xi
_{N-1}F\right) \dsum\limits_{j=0}^{N-1}W_{j,N-1}x_{N-1-j})>+ \\
+E\left[ \left\Vert Cx_{N-1}\right\Vert ^{2}\right] +E[<K\dsum%
\limits_{j=0}^{N-1}W_{j,N-1}x_{N-1-j},\dsum%
\limits_{j=0}^{N-1}W_{j,N-1}x_{N-1-j}>] \\
=E<S(\dsum\limits_{j=0}^{N-1}V_{N-1,j}^{S,1}x_{N-1-j}+\xi
_{N-1}\dsum\limits_{j=0}^{N-1}V_{N-1,j}^{S,2}x_{N-1-j}), \\
\dsum\limits_{j=0}^{N-1}V_{N-1,j}^{S,1}x_{N-1-j}+\xi
_{N-1}\dsum\limits_{j=0}^{N-1}V_{N-1,j}^{S,2}x_{N-1-j}>+ \\
+E\left[ \left\Vert Cx_{N-1}\right\Vert ^{2}\right] +E[<K\dsum%
\limits_{j=0}^{N-1}W_{j,N-1}x_{N-1-j},\dsum%
\limits_{j=0}^{N-1}W_{j,N-1}x_{N-1-j}>]
\end{gather*}

Denoting 
\begin{eqnarray}
V_{N-1,0}^{S,2} &=&B+FW_{0,N-1},V_{N-1,j}^{S,2}=FW_{j,N-1},j\neq 0,
\label{rec 0} \\
V_{N-1,j}^{S,1} &=&A_{j}+DW_{j,N-1},V_{N-1,j}^{K,1}=W_{j,N-1},j\in
\{0,..,N-1\}  \notag
\end{eqnarray}%
we obtain the optimal value of the cost: 
\begin{gather*}
\mathcal{O}(x_{0},..,x_{N-1})=E\left[ \left\Vert \sqrt{S}\dsum%
\limits_{j=0}^{N-1}V_{N-1,j}^{S,1}x_{N-1-j}\right\Vert ^{2}\right] + \\
E\left[ \left\Vert \sqrt{S}\dsum\limits_{j=0}^{N-1}V_{N-1,j}^{S,2}x_{N-1-j}%
\right\Vert ^{2}\right] + \\
E\left[ \left\Vert Cx_{N-1}\right\Vert ^{2}\right] +E\left[ \left\Vert \sqrt{%
K}\dsum\limits_{j=0}^{N-1}V_{N-1,j}^{K,1}x_{N-1-j}\right\Vert ^{2}\right]
\end{gather*}

Now we assume that%
\begin{eqnarray*}
P_{N-2} &:&x_{N-2},x_{N-1},x_{N} \\
&&u_{N-2},u_{N-1}
\end{eqnarray*}%
is a final segment of the process $P_{0}$.

Then $P_{N-2}$ should be optimal for $I_{N-2,x_{0},..,x_{N-2}}(u)$. Since%
\begin{gather*}
\min_{u_{N-2},u_{N-1}\in \mathcal{U}%
_{N-2,N-1}^{a}}I_{N-2,x_{0},..,x_{N-2}}(u)= \\
\min_{u_{N-2},u_{N-1}\in \mathcal{U}_{N-2,N-1}^{a}}%
\{I_{N-1,x_{0},..,x_{N-1}}\left( u\right) +E\left[ \left\Vert
Cx_{N-2}\right\Vert ^{2}\right] + \\
E\left[ \left\langle Ku_{N-2},u_{N-2}\right\rangle \right] \}= \\
\min_{u_{N-2}\in \mathcal{U}_{N-2,N-2}^{a}}\{\min_{u_{N-1}\in \mathcal{U}%
_{N-1,N-1}^{a}}I_{N-1,x_{0},..,x_{N-1}}\left( u\right) +E\left[ \left\Vert
Cx_{N-2}\right\Vert ^{2}\right] + \\
+E\left[ \left\langle Ku_{N-2},u_{N-2}\right\rangle \right] \}= \\
\min_{u_{N-2}\in \mathcal{U}_{N-2,N-2}^{a}}\{\mathcal{O}(x_{0},..,x_{N-1})+E%
\left[ \left\Vert Cx_{N-2}\right\Vert ^{2}+<Ku_{N-2},u_{N-2}>\right] \},
\end{gather*}%
it follows that $u_{N-1}$ is given by (\ref{form U N-1}) and $u_{N-2}$
should be computed. Substituting $x_{N-1}$ given by (\ref{ec c2}) in $%
\mathcal{O}(x_{0},..x_{N-2},x_{N-1}),$ we see that $\mathcal{O}%
(x_{0},..x_{N-2},x_{N-1})=\phi \left( x_{0},..x_{N-2},u_{N-2}\right) $ and $%
u_{N-2}$ solves the optimal control problem%
\begin{gather*}
\min_{u_{N-2},u_{N-1}\in \mathcal{U}%
_{N-2,N-1}^{a}}I_{N-2,x_{0},..,x_{N-2}}(u)= \\
=\min_{u_{N-2}\in \mathcal{U}_{N-2,N-2}^{a}}\{\phi \left(
x_{0},..x_{N-2},u_{N-2}\right) +E\left[ \left( \left\Vert
Cx_{N-2}\right\Vert ^{2}+<Ku_{N-2},u_{N-2}>\right) \right] \}.
\end{gather*}%
Using again the squares completion technique, we can prove that the optimal
control $u_{N-2}$ is a linear function of $x_{0},..x_{N-2}$ and $\underset{%
u_{N-2},u_{N-1}\in \mathcal{U}_{N-2,N-1}^{a}}{\min }I_{N-2}(u)$ is a
function of the trajectory $x_{0},..,x_{N-2}$. Repeating the above
arguments, we find $u_{N-3},u_{N-4}$ and so on. The general step of the
above algorithm is described in detail in the Appendix. At the step $q$ we
find the optimal control $u_{N-q}$ as a linear function of $x_{0},..,x_{N-q}$
of the form $\dsum\limits_{j=0}^{N-q}W_{j,N-q}x_{N-q-j}$ where the
coefficients $W_{j,N-q}$ are given by a set of recurrent formulas (see (\ref%
{v n-q}), (\ref{rec 1}), (\ref{rec 2}),(\ref{rec 3}) in the Appendix). At a
first sight this algorithm is more complicated than the one described in
Section 3.

\begin{example}
Consider the optimal control problem $\mathcal{O}$ under the hypotheses of
Example \ref{exp 1}. Implementing in MATLAB the Algorithm $A$, we obtain the
following results. Since $W_{0,3}=\left( 
\begin{array}{cc}
-0.3333 & -0.600%
\end{array}%
\right) ,W_{1,3}=\left( 
\begin{array}{cc}
-0.0167 & 0.0167%
\end{array}%
\right) $, $W_{2,3}=\left( 
\begin{array}{cc}
-0.0083 & 0.0083%
\end{array}%
\right) $ and\ $W_{3,3}=\left( 
\begin{array}{cc}
-0.0052 & 0.0052%
\end{array}%
\right) $ we deduce by (\ref{form U N-1}) that the optimal control $u_{3}$
have the same formula as the one obtained in Example \ref{exp 1}. Further,
we compute $V_{3,j}^{S,1},V_{3,j}^{S,2}$,$V_{3,j}^{K,1},j=0,1,2,3$. Writing (%
\ref{form R}) for $q=2$ we get $J_{2}=67.3667$. Also the coefficients of $%
x_{0},x_{1}$ and $x_{2}$ from (\ref{v n-q}) are $\left( 
\begin{array}{cc}
0.8151 & -0.3630%
\end{array}%
\right) $, $\left( 
\begin{array}{cc}
1.5750 & -0.7333%
\end{array}%
\right) $ and $\left( 
\begin{array}{cc}
32.2583 & 46.4417%
\end{array}%
\right) $, respectively. Since $W_{j,N-q}$ is obtained by multiplying the
coefficient of $x_{N-q-j}$ from $v_{N-q}$ with $-J_{N-q}^{-1}$, we get $%
W_{0,2}=-J_{2}^{-1}\left( 
\begin{array}{cc}
32.2583 & 46.4417%
\end{array}%
\right) =\left( 
\begin{array}{cc}
-0.478\,85 & -0.689\,39%
\end{array}%
\right) $,

\noindent $W_{1,2}=\left( 
\begin{array}{cc}
-0.0233 & 0.0108%
\end{array}%
\right) $ and $W_{2,2}=\left( 
\begin{array}{cc}
-0.0120 & 0.0053%
\end{array}%
\right) $. Thus 
\begin{eqnarray*}
u_{2} &=&\left( 
\begin{array}{cc}
-0.478\,85 & -0.689\,39%
\end{array}%
\right) x_{2}+\left( 
\begin{array}{cc}
-0.0233 & 0.0108%
\end{array}%
\right) x_{1}+ \\
&&\left( 
\begin{array}{cc}
-0.0120 & 0.0053%
\end{array}%
\right) x_{0}
\end{eqnarray*}%
and the formula of $\widetilde{u}_{2}$ obtained in Example \ref{exp 1} is
recovered. At the next step are computed the coefficients $%
V_{3,j}^{S,l},l=1,..,4,V_{3,j}^{S,l}$,$l=1,2,3,V_{3,j}^{K,1},l=1,2$, $j=0,1,2
$. With (\ref{form R}) written for $q=1$ we obtain $J_{1}=196.1711$. The
coefficients of $x_{0}$ and $x_{1}$ from (\ref{v n-q}) are $\left( 
\begin{array}{cc}
-0,0253 & 0.0107%
\end{array}%
\right) $, $\left( 
\begin{array}{cc}
4.8670 & -2.2318%
\end{array}%
\right) $ and 
\begin{equation*}
u_{1}=\left( 
\begin{array}{cc}
-0.456\,34 & -0.70234%
\end{array}%
\right) x_{1}+\left( 
\begin{array}{cc}
-0,0253 & 0.0107%
\end{array}%
\right) x_{0}.
\end{equation*}%
Continuing the procedure, we obtain $u_{0}=\left( 
\begin{array}{cc}
-0.4589 & -0.7481%
\end{array}%
\right) $ and $\mathcal{O}\left( x_{0}\right) =x_{0}^{T}\left( 
\begin{array}{cc}
266.8471 & 32.9452 \\ 
32.9452 & 149.4033%
\end{array}%
\right) x_{0}=28.074$.
\end{example}

\section{Conclusions}

This paper provides two methods of solving the LQ optimal control problem $%
\mathcal{O}$. Both of them are based on the dynamic programming approach.
The first one seems to be new and easier. It consists in a reformulation of
the problem for an associated linear non-fractional system (\ref{sl 1})-(\ref%
{ci sl1}), defined on spaces of higher dimensions. The second one uses the
Principle of Optimality to derive a dynamic programming algorithm for the
optimal control of the LFS. This algorithm is a stochastic counterpart of
the one obtained in \cite{polonezi} for deterministic LFSs; it keeps the
dimensions of the state space of system (\ref{ec c2})-(\ref{ci c2}), but it
is more laborious. The computer program implementing it is not such simple
and fast as the one that implements the first method. A future analysis of
these algorithms from the computer science point of view will highlight the
real advantages and disadvantages of each method.

\section{Appendix}

\begin{center}
{\LARGE The general step of the algorithm }$A$
\end{center}

Our problem is to find the final segment 
\begin{eqnarray*}
P_{N-q} &:&x_{N-q},x_{N-q+1}..,x_{N-1},x_{N} \\
&&u_{N-q},..,u_{N-2},u_{N-1}.
\end{eqnarray*}%
of $P_{0}$ which minimizes $I_{N-q,,x_{0},..,x_{N-q}}(u)$. Assume that the
optimal controls $u_{N-1},...,u_{N-q+1}$ ,$q\geq 2$ were determined and the
optimal cost 
\begin{equation*}
I_{N-q+1,,x_{0},..,x_{N-q+1}}(u)
\end{equation*}
has the form 
\begin{gather}
\mathcal{O}(x_{0},..x_{N-q},x_{N-q+1}):=\min_{u_{N-q+1},...,u_{N-1}\in 
\mathcal{U}_{N-q+1,N-1}^{a}}I_{N-q+1,x_{0},..,x_{N-q+1}}(u)=  \notag \\
\dsum\limits_{l=1}^{2^{q-1}}E\left\Vert \sqrt{S}\dsum%
\limits_{j=0}^{N-q+1}V_{N-q+1,j}^{S,l}x_{N-q+1-j}\right\Vert ^{2}+  \notag \\
\dsum\limits_{l=1}^{2^{q-1}-1}E[\left\Vert \sqrt{K}\dsum%
\limits_{j=0}^{N-q+1}V_{N-q+1,j}^{K,l}x_{N-q+1-j}\right\Vert ^{2}  \notag \\
\dsum\limits_{l=1}^{2^{q-1}-2}E[\left\Vert
C\dsum\limits_{j=0}^{N-q+1}V_{N-q+1,j}^{C,l}x_{N-q+1-j}\right\Vert ^{2}+E 
\left[ \left\Vert Cx_{N-q+1}\right\Vert ^{2}\right]  \notag \\
=\dsum\limits_{l=1}^{2^{q-1}}\sigma
_{N-q+1}^{S,l}+\dsum\limits_{l=1}^{2^{q-1}-1}\sigma
_{N-q+1}^{K,l}+\dsum\limits_{l=1}^{2^{q-1}-2}\sigma _{N-q+1}^{C,l}+E\left[
\left\Vert Cx_{N-q+1}\right\Vert ^{2}\right]  \label{form opt1}
\end{gather}%
where $V_{N-q+1,j}^{S,l},V_{N-q+1,j}^{K,l}$ and $V_{N-q+1,j}^{C,l}$ are
matrices of appropriate dimensions depending on the coefficients of the
optimal control problem. We shall compute the optimal control $u_{N-q}$ and
we shall prove that $\mathcal{O}(x_{0},..x_{N-q})$ is given by a formula of
the form (\ref{form opt1}) where $q$ is replaced by $q-1$.

We know that 
\begin{equation*}
I_{N-q,,x_{0},..,x_{N-q}}(u)=I_{N-q+1,x_{0},..,x_{N-q+1}}(u)+E\left[ \left(
\left\Vert Cx_{N-q}\right\Vert ^{2}+<Ku_{N-q},u_{N-q}>\right) \right] .
\end{equation*}%
Then 
\begin{gather*}
\min_{u_{N-q},...,u_{N-1}\in \mathcal{U}%
_{N-q,N-1}^{a}}I_{N-q,x_{0},..,x_{N-q+1}}(u)= \\
\min_{u_{N-q}\in \mathcal{U}_{N-q,N-q}^{a}}\{\min_{u_{N-q+1},...,u_{N-1}\in 
\mathcal{U}_{N-q+1,N-1}^{a}}I_{x,N-q+1}(u)+E\left[ \left( \left\Vert
Cx_{N-q}\right\Vert ^{2}+\left\langle Ku_{N-q},u_{N-q}\right\rangle \right) %
\right] \} \\
:=\min_{u_{N-q}\in \mathcal{U}_{N-q,N-q}^{a}}f\left(
x_{0},..,x_{N-q},x_{N-q+1},u_{N-q}\right) .
\end{gather*}

Substituting 
\begin{equation}
x_{N-q+1}=\dsum\limits_{j=0}^{N-q}A_{j}x_{N-q-j}+\xi
_{N-q}Bx_{N-q}+Du_{N-q}+\xi _{N-q}Fu_{N-q}  \label{X N-q}
\end{equation}%
in $\sigma _{N-q+1}^{S,l}$ (see (\ref{form opt1})) we obtain $\sigma
_{N-q+1}^{S,l}$ as a function of the known $x_{j},j=0,N-q$ and the unknown $%
u_{N-q}.$ We have 
\begin{gather*}
\sigma _{N-q+1}^{S,l}= \\
E\left\Vert \sqrt{S}(\dsum%
\limits_{j=1}^{N-q+1}V_{N-q+1,j}^{S,l}x_{N-q+1-j}+V_{N-q+1,0}^{S,l}\dsum%
\limits_{j=0}^{N-q}A_{j}x_{N-q-j}+V_{N-q+1,0}^{S,l}Du_{N-q})\right\Vert ^{2}
\\
+E\left\Vert \sqrt{S}V_{N-q+1,0}^{S,l}(Bx_{N-q}+Fu_{N-q})\right\Vert ^{2} \\
=E\left\Vert \sqrt{S}\dsum\limits_{j=0}^{N-q}\left(
V_{N-q+1,j+1}^{S,l}+V_{N-q+1,0}^{S,l}A_{j}\right) x_{N-q-j}\right\Vert
^{2}+E\left\Vert \sqrt{S}V_{N-q+1,0}^{S,l}Bx_{N-q}\right\Vert ^{2} \\
+2E<S\dsum\limits_{j=0}^{N-q}\left(
V_{N-q+1,j+1}^{S,l}+V_{N-q+1,0}^{S,l}A_{j}\right)
x_{N-q-j},V_{N-q+1,0}^{S,l}Du_{N-q}> \\
+2E\left\langle \left( V_{N-q+1,0}^{S,l}\right) ^{\ast
}SV_{N-q+1,0}^{S,l}Bx_{N-q},Fu_{N-q})\right\rangle \\
+E\left\Vert \sqrt{S}V_{N-q+1,0}^{S,l}Du_{N-q}\right\Vert ^{2}+E\left\Vert 
\sqrt{S}V_{N-q+1,0}^{S,l}Fu_{N-q}\right\Vert ^{2}
\end{gather*}%
A similar computation leads to a formula for $\sigma _{N-q+1}^{K}$ and $%
\sigma _{N-q+1}^{C}$ obtained from the one above by replacing $S$ by $K$. \
Also 
\begin{gather*}
E\left[ \left\Vert Cx_{N-q+1}\right\Vert ^{2}\right] =E\left\Vert
C\dsum\limits_{j=0}^{N-q}A_{j}x_{N-q-j}\right\Vert ^{2}+E\left\Vert
CBx_{N-q}\right\Vert ^{2}+ \\
2E<D^{\ast }C^{\ast
}C\dsum\limits_{j=0}^{N-1}A_{j}x_{N-q-j},u_{N-q}>+2E\left\langle F^{\ast
}C^{\ast }CBx_{N-q},u_{N-q}\right\rangle \\
+E\left\Vert CFu_{N-q}\right\Vert ^{2}+E\left\Vert CDu_{N-q}\right\Vert ^{2}.
\end{gather*}

Therefore, substituting (\ref{X N-q}) in $f\left(
x_{0},..,x_{N-q},x_{N-q+1},u_{N-q}\right) $ we see that $f\left(
x_{0},..,x_{N-q},x_{N-q+1},u_{N-q}\right) $ becomes a function $%
g(x_{0},..,x_{N-q},u_{N-q})$ of the form 
\begin{equation*}
w_{N-q}\left( x_{0},..,x_{N-q}\right) +2E\left\langle v_{N-q}\left(
x_{0},..,x_{N-q}\right) ,u_{N-q}\right\rangle +E\left\langle
J_{N-q}u_{N-q},u_{N-q}\right\rangle ,
\end{equation*}%
where 
\begin{gather}
J_{N-q}=K+\dsum\limits_{l=1}^{2^{q-1}}\left[ \left(
V_{N-q+1,0}^{S,l}D\right) ^{\ast }SV_{N-q+1,0}^{S,l}D+\left(
V_{N-q+1,0}^{S,l}F\right) ^{\ast }SV_{N-q+1,0}^{S,l}F\right]  \label{form R}
\\
+\dsum\limits_{l=1}^{2^{q-1}-1}\left[ \left( V_{N-q+1,0}^{K,l}D\right)
^{\ast }KV_{N-q+1,0}^{K,l}D+\left( V_{N-q+1,0}^{K,l}F\right) ^{\ast
}KV_{N-q+1,0}^{K,l}F\right]  \notag \\
+\dsum\limits_{l=1}^{2^{q-1}-2}\left[ \left( V_{N-q+1,0}^{C,l}D\right)
^{\ast }C^{\ast }CV_{N-q+1,0}^{C,l}D+\left( V_{N-q+1,0}^{C,l}F\right) ^{\ast
}C^{\ast }CV_{N-q+1,0}^{C,l}F\right]  \notag \\
+F^{\ast }C^{\ast }CF+D^{\ast }C^{\ast }CD>0  \notag
\end{gather}%
and 
\begin{gather}
v_{N-q}\left( x_{0},..,x_{N-q}\right) =  \label{v n-q} \\
\dsum\limits_{l=1}^{2^{q-1}}\{\left( V_{N-q+1,0}^{S,l}D\right) ^{\ast
}S\dsum\limits_{j=0}^{N-q}\left(
V_{N-q+1,j+1}^{S,l}+V_{N-q+1,0}^{S,l}A_{j}\right) x_{N-q-j}+  \notag \\
\left( V_{N-q+1,0}^{S,l}F\right) ^{\ast }SV_{N-q+1,0}^{S,l}Bx_{N-q}\}+ 
\notag \\
\dsum\limits_{l=1}^{2^{q-1}-1}\{\left( V_{N-q+1,0}^{K,l}D\right) ^{\ast
}K\dsum\limits_{j=0}^{N-q}\left(
V_{N-q+1,j+1}^{K,l}+V_{N-q+1,0}^{K,l}A_{j}\right) x_{N-q-j}+  \notag \\
\left( V_{N-q+1,0}^{K,l}F\right) ^{\ast }KV_{N-q+1,0}^{K,l}Bx_{N-q}\}+ 
\notag \\
\dsum\limits_{l=1}^{2^{q-1}-2}\{\left( V_{N-q+1,0}^{C,l}D\right) ^{\ast
}C^{\ast }C\dsum\limits_{j=0}^{N-q}\left(
V_{N-q+1,j+1}^{C,l}+V_{N-q+1,0}^{C,l}A_{j}\right) x_{N-q-j}+  \notag \\
\left( V_{N-q+1,0}^{C,l}F\right) ^{\ast }C^{\ast
}CV_{N-q+1,0}^{C,l}Bx_{N-q}\}+  \notag \\
D^{\ast }C^{\ast }C\dsum\limits_{j=0}^{N-q}A_{j}x_{N-q-j}+F^{\ast }C^{\ast
}CBx_{N-q}.  \notag
\end{gather}

Reasoning as in the case $q=1,$ we get the optimal control 
\begin{eqnarray}
u_{N-q}^{\ast }\left( x_{0},..,x_{N-q}\right) &=&-J_{N-q}^{-1}v_{N-q}
\label{form cost opt} \\
&=&\dsum\limits_{j=0}^{N-q}W_{j,N-q}x_{N-q-j}.  \notag
\end{eqnarray}%
Taking into account (\ref{v n-q}), we see that for all $j\in \{0,1,...,N-q\}$%
, $W_{j,N-q}$ is obtained by multiplying the coefficient of $x_{N-q-j}$,
from $v_{N-q},$ with $-J_{N-q}^{-1}$.

Replacing (\ref{form cost opt}) in $\sigma _{N-q+1}^{S,l}$, we observe that,
for all $l\in \{1,..,2^{q-1}\},$ 
\begin{gather*}
\sigma _{N-q+1}^{S,l}= \\
E\left\Vert S\dsum\limits_{j=0}^{N-q}\left(
V_{N-q+1,j+1}^{S,l}+V_{N-q+1,0}^{S,l}A_{j}\right)
x_{N-q-j}+V_{N-q+1,0}^{S,l}Du_{N-q}\right\Vert ^{2}+ \\
+E\left\Vert \sqrt{S}V_{N-q+1,0}^{S,l}(Bx_{N-q}+Fu_{N-q})\right\Vert ^{2} \\
=E\left\Vert \sqrt{S}\dsum\limits_{j=0}^{N-q}V_{N-q,j}^{S,l}x_{N-q-j}\right%
\Vert ^{2}+E\left\Vert \sqrt{S}\dsum%
\limits_{j=0}^{N-q}V_{N-q,j}^{S,l+2^{q-1}}x_{N-q-j}\right\Vert ^{2}
\end{gather*}%
and 
\begin{equation*}
\dsum\limits_{l=1}^{2^{q-1}}\sigma
_{N-q+1}^{S,l}=\dsum\limits_{l=1}^{2^{q}}E\left\Vert \sqrt{S}%
\dsum\limits_{j=0}^{N-q}V_{N-q,j}^{S,l}x_{N-q-j}\right\Vert ^{2},
\end{equation*}%
where%
\begin{eqnarray}
V_{N-q,j}^{S,l} &=&V_{N-q+1,j+1}^{S,l}+V_{N-q+1,0}^{S,l}\left(
A_{j}+DW_{j,N-q}\right) ,  \label{rec 1} \\
V_{N-q,0}^{S,l+2^{q-1}} &=&V_{N-q+1,0}^{S,l}\left( B+FW_{0,N-q}\right) , 
\notag \\
V_{N-q,j}^{S,l+2^{q-1}} &=&V_{N-q+1,0}^{S,l}FW_{j,N-q},j\neq 0.  \notag
\end{eqnarray}%
Arguing as above and using (\ref{cost optim}), we obtain 
\begin{gather*}
\dsum\limits_{l=1}^{2^{q-1}-1}E[\left\Vert \sqrt{K}\dsum%
\limits_{j=0}^{N-q+1}V_{N-q+1,j}^{K,l}x_{N-q+1-j}\right\Vert
^{2}+\left\langle Ku_{N-q},u_{N-q}\right\rangle = \\
=\dsum\limits_{l=1}^{2^{q}-1}E\left\Vert \sqrt{K}\dsum%
\limits_{j=0}^{N-q}V_{N-q,j}^{K,l}x_{N-q-j}\right\Vert
\end{gather*}%
where for all $l\in \{1,..,2^{q-1}-1\},j\leq N-q$ 
\begin{eqnarray}
V_{N-q,j}^{K,l} &=&V_{N-q+1,j+1}^{K,l}+V_{N-q+1,0}^{K,l}\left(
A_{j}+DW_{j,N-q}\right) ,  \label{rec 2} \\
V_{N-q,0}^{K,l+2^{q-1}-1} &=&V_{N-q+1,0}^{K,l}\left( B+FW_{0,N-q}\right) , 
\notag \\
V_{N-q,j}^{K,l+2^{q-1}-1}
&=&V_{N-q+1,0}^{K,l}FW_{j,N-q},V_{N-q,j}^{K,2^{q}-1}=W_{j,N-q},j\neq 0. 
\notag
\end{eqnarray}%
Similarly, 
\begin{gather*}
\dsum\limits_{l=1}^{2^{q-1}-2}E[\left\Vert
C\dsum\limits_{j=0}^{N-q+1}V_{N-q+1,j}^{K,l}x_{N-q+1-j}\right\Vert ^{2}+E 
\left[ \left\Vert Cx_{N-q+1}\right\Vert ^{2}\right] ^{2}= \\
\dsum\limits_{l=1}^{2^{q}-2}E\left\Vert
C\dsum\limits_{j=0}^{N-q}V_{N-q,j}^{C,l}x_{N-q-j}\right\Vert
\end{gather*}%
where $V_{N-q,j}^{C,l},1\leq l\leq 2^{q-1}-2,j\leq N-q$ are given by 
\begin{eqnarray}
V_{N-q,j}^{C,l} &=&V_{N-q+1,j+1}^{C,l}+V_{N-q+1,0}^{C,l}\left(
A_{j}+DW_{j,N-q}\right) ,  \label{rec 3} \\
V_{N-q,0}^{C,l+2^{q-1}-2} &=&V_{N-q+1,0}^{C,l}\left( B+FW_{0,N-q}\right) , 
\notag \\
V_{N-q,j}^{C,l+2^{q-1}-2}
&=&V_{N-q+1,0}^{C,l}FW_{j,N-q},V_{N-q,j}^{C,2^{q}-3}=A_{j}+DW_{j,N-q},j\neq 0
\notag \\
V_{N-q,0}^{C,2^{q}-2} &=&B+FW_{0,N-q},V_{N-q,j}^{C,2^{q}-2}=FW_{j,N-q},j\neq
0.  \notag
\end{eqnarray}

Now it is clear that a formula for $\mathcal{O}(x_{0},..,x_{N-q})$ can be
obtained by replacing $q$ with $q+1$ in (\ref{form opt1}) and using the
coefficients (\ref{rec -1}), (\ref{rec 0}), (\ref{rec 1}), (\ref{rec 2}) and
(\ref{rec 3}).

The optimal cost $I_{x_{0},N}(u)$ is given by $\mathcal{O}(x_{0})$, i.e. by
formula (\ref{form opt1}) written for $q=N+1$.

\end{document}